\newcommand{\commentout}[1]{}
\newcommand{\Rm}{\mathbb{R}}
\def \Rset {{\mathbb R}}
\def \Zset {{\mathbb Z}}
\def \Tset {{\mathbb T}}
\newcommand{\ra}{\rightarrow}
\newcommand{\nit}{\noindent}
\newcommand{\be}{\begin{equation}}
\newcommand{\ee}{\end{equation}}
\newcommand{\ba}{\begin{eqnarray}}
\newcommand{\ea}{\end{eqnarray}}
\newcommand{\bi}{\begin{itemize}}
\newcommand{\ei}{\end{itemize}}
\newcommand{\br}{\begin{eqnarray}}
\newcommand{\er}{\end{eqnarray}}
\newcommand{\eps}{\varepsilon}
\newcommand{\real}{\mathbb{R}}
\newcommand{\ol}{\overline}
\newcommand{\ep}{\varepsilon}
\newtheorem{theo}{Theorem}[section]
\newtheorem{prop}{Proposition}[section]
\newtheorem{lem}{Lemma}[section]
\newtheorem{cor}{Corollary}[section]
\newtheorem{rmk}{Remark}[section]
\theoremstyle{definition}
\newtheorem{quest}{Question}
\begin{document}
\title[Effective burning velocity]{Inverse problems, non-roundness and flat pieces of the effective burning velocity from an inviscid quadratic Hamilton-Jacobi model}
\author[W. Jing]{Wenjia Jing}
\address{Department of Mathematics\\
The University of Chicago\\ 5734 S. University Avenue Chicago, Illinois 60637, USA}
\email{wjing@math.uchicago.edu}

\author[H. V. Tran]{Hung V. Tran}
\address{Department of Mathematics\\
University of Wisconsin Madison\\ 480 Lincoln drive, Madison, WI 53706, USA}
\email{hung@math.wisc.edu}

\author[Y. Yu]{Yifeng Yu}
\address{Department of Mathematics\\
University of California at Irvine, California 92697, USA}
\email{yyu1@math.uci.edu}

\thanks{
The  work of WJ is partially supported by NSF grant DMS-1515150,
the work of HT is partially supported by NSF grant DMS-1615944,
the work of YY is partially supported by NSF CAREER award \#1151919.
}

\date{}

\keywords{Aubry-Mather theory, effective Hamiltonian, effective burning velocity, flame propagation, homogenization, inverse problems, magnetic Hamiltonian,  reaction-diffusion-convection equation, viscosity solutions}
\subjclass[2010]{
35B27, 
35B40, 
35D40, 
35R30, 
37J50, 
49L25 
}

\maketitle

\begin{abstract}
The main goal of this paper is to understand finer properties of the effective burning velocity from  a combustion model introduced by  Majda and Souganidis \cite{MS_94}.  Motivated by results in \cite{B2} and applications in turbulent combustion,  we show that when the dimension is two and the flow of the ambient fluid is either weak or very strong, the level set of the effective burning velocity has flat pieces. Due to the lack of an applicable Hopf-type rigidity result,  we need to identify the exact location of at least one flat piece.   Implications on the effective flame front and other related inverse type problems are also discussed. 
\end{abstract}

\section{Introduction} 

We consider a flame propagation model proposed by Majda, Souganidis \cite{MS_94} described as follows. 
Suppose that $V:\Rset^n\to  \Rset^n$ is a given smooth, mean zero, $\Zset^n$-periodic and incompressible vector field. 
Let $T=T(x,t):\Rset^n \times [0,\infty) \to \Rset$ be the solution of the reaction-diffusion-convection equation
$$
T_t +  V \cdot D T= \kappa \, \Delta T+\, {1\over \tau_r}\, f(T) \quad \text{in} \ \Rset^n \times (0,\infty)
$$
with given compactly supported initial data $T(x,0)$. 
Here $\kappa$ and $\tau_r$ are positive constants proportional to the flame thickness, which has a small length scale denoted by $\ep>0$. 
The nonlinear function $f(T)$ is of KPP type, i.e.,
\begin{align*}
&f>0 \ \text{in} \ (0,1), \quad f<0 \ \text{in} \ (-\infty,0) \cup (1,\infty),\\
&f'(0)=\inf_{T>0} \frac{f(T)}{T}>0.
\end{align*}
In turbulent combustions, the velocity field usually varies on small scales as well. 
We write $V=V\left({x\over \ep ^{\gamma}}\right)$ and, since the flame thickness is in general much smaller than the turbulence scale, as in \cite{MS_94}, we set $\gamma\in (0,1)$ and write $\kappa=d\ep$ and $\tau_r = \eps$ for some given $d>0$. 
To simplify notations, throughout this paper,  we set $f'(0)=d=1$.
The corresponding equation becomes
$$
T^\ep_t +  V\left({x\over \ep ^{\gamma}}\right) \cdot D T^\ep= \ep \Delta T^{\ep}+\, {1\over \ep}\, f(T^\ep) \quad \text{in} \ \Rset^n \times (0,\infty),
$$
which has a unique solution $T^{\ep}$.   It was proven in \cite{MS_94} that
$T^{\eps} \ra 0$ locally uniformly in $\{(x,t)\,:\,Z < 0\}$, as $\eps \ra 0$, 
and $T^{\eps} \ra 1$ locally uniformly in the interior of
$\{(x,t)\,:\, Z=0\}$. Here, $Z \in C(\real^n\times [0,+\infty))$ is the unique
viscosity solution of a  variational inequality.  Moreover, the set $\Gamma_{t} =\partial \{ x \in \real^n\,:\, Z(x,t) < 0\}$ can be viewed as
a front moving with the normal velocity:
$$
v_{\vec{n}}=\alpha(\vec{n}).
$$
Here, $\alpha$ is the effective burning velocity defined as: For $p \in \Rset^n$,
\be\label{gamma}
\alpha(p)=\inf_{\lambda>0}{1+{\overline H} (\lambda p)\over \lambda}.
\ee
The convex function $\overline {H}:\Rset^n \to \Rset$ is called the effective Hamiltonian.
For each $p\in \Rset^n$, $\overline{H}(p)$ is defined to be the unique constant (ergodic constant) such that the following cell problem
\be\label{Fcell}
H(p+Du,x) = |p + Du|^{2} +V(x)\cdot (p + Du) = {\overline H}(p) \quad \text{in} \ \Tset^n=\Rset^n/\Zset^n
\ee
admits a periodic viscosity solution  $u \in C^{0,1}(\Bbb T^n)$. See \cite{LPV} for the general statement.
As $\gamma<1$, there is no viscous term in \eqref{Fcell} (see \cite[Proposition 1.1]{MS_94}).  Under the level-set approach,   the effective flame front $\Gamma_t$ can be described as the zero level set of $F=F(x,t)$, which satisfies
$$
F_t+\alpha(DF)=0
$$
with $\Gamma_0=\{F(x,0)=0\}$.   Thus, $\alpha(p)$ can be viewed as one way to model turbulent flame speed, a significant quantity in turbulent combustion.  See \cite{EMS, XY} for comparisons between $\alpha(p)$ and the turbulent flame speed modeled by  the G-equation (a popular level-set approach model in combustion community).  
 
The original Hamiltonian $H(p,x) = |p|^2 + V(x) \cdot p$ is similar to the so called Ma\~n\'e Hamiltonian (or  magnetic Lagrangian)  in the dynamical system community. Throughout this paper,  we assume that  $V$ is smooth,  $\Zset^n$-periodic,  incompressible and has mean zero, i.e.,
 \[
 \text{div}(V)=0 \quad \text{and}  \quad \int_{\Bbb T^n}V\,dx=0.
 \]
   It is easy to see that 
$$
\overline H(0)=0,  \quad \overline H(p)\geq |p|^2  \quad \mathrm{and}  \quad  \alpha(p)\geq 2|p|.
$$
Moreover,  $\alpha(p)$ is convex and positive  homogeneous of degree $1$. See Lemma \ref{basic}. 

Practically speaking,  it is always desirable to get more information of the turbulent flame speed (effective burning velocity). In combustion literature, the turbulent flame speed is often considered to be isotropic and various explicit formulas have been introduced to quantify it.  See \cite{Alan, KTW} and the references therein.  From the mathematical perspective,  it is a very interesting and challenging problem to rigorously identify the shape of the effective Hamiltonian or other effective quantities. In this paper,  we are interested in understanding some refined properties of the effective burning velocity $\alpha(p)$. In particular,

\begin{quest} \label{quest-1}
If the flow is not at rest, that is, the velocity field $V$ is not constantly zero, can the convex level set $\{p \in \Rm^n\,:\,\alpha(p)=1\}$ be strictly convex?  A simpler inverse type question is whether  $\alpha(p)=c|p|$  for some $c>0$ (i.e., isotropic)  implies that $V\equiv 0$.   
\end{quest}

\begin{rmk} \label{rem:FlatPiece} When $n=2$,  $\alpha(p)$ is actually $C^1$ away from the origin (Lemma \ref{basic}).  
If the initial flame front is the circle $S^1=\{x \in \Rm^2 \,:\, |x|=1\}$, then, owing to Theorem \ref{frontmove},  the effective front at $t>0$ is $\Gamma_t=\{x+tD\alpha (x) \,:\, x\in S^{1}\}$ which is a $C^1$ and strictly convex curve.   Obviously, if $\alpha$ is Euclidean, that is $\alpha(p) = |p|$, then $\Gamma_t$ is round for all $t > 0$.   If the level curve $\{\alpha(p) = 1\}$ contains a flat piece, then there exist $x_0$, $x_1\in  S^1$ such that
$$
D\alpha(x_s)\equiv  D\alpha(x_0)    \quad \text{for $x_s=(1-s)x_0+sx_1$ and $s\in  [0,1]$ }.
$$
In view of the positive homogeneity of $\alpha$, $D\alpha (p) = D\alpha(x_0) = D\alpha(x_1)$ for all $p \in S^1$ between $x_0$ and $x_1$, i.e. $(p-x_0) \cdot (p-x_1) < 0$. Owing to the representation of $\Gamma_t$, the arc between $x_0$ and $x_1$ of $S^1$ is translated in time and is contained in the front $\Gamma_t$.  This sort of  implies that along the direction   $D\alpha(x_0)$,  the linear transport overwhelms the nonlinear reaction term and dominates the spread of flame, i.e., the propagation behaves like $F_t+D\alpha(x_0)\cdot DF=0$. 

\end{rmk}

Before stating the main results, we review some related works that partly motivate the study of the above questions from the mathematical perspective.  Consider the metric Hamiltonian  $H(p,x)=\sum_{1\leq i,j\leq n}a_{ij}(x)p_ip_j$  with smooth,  periodic and positive definite coefficient $(a_{ij})$.   It was proven in a very interesting paper of Bangert \cite{B2} that, for $n=2$, if the convex level curve $\{p\in \Rset^2\,:\,\overline H(p) \leq 1\}$ is strictly convex, then $(a_{ij})$ must be a constant matrix.  The argument consists of two main ingredients. First,  through a delicate analysis using two dimensional topology, Bangert showed that  if the level set is strictly convex at a point, then the corresponding Mather set of that point is the whole torus $\Tset^2$ and it is foliated by minimizing geodesics pointing to a specific direction. Secondly, a  well-known theorem of Hopf \cite{H} was then applied which says that a periodic Riemannian metric on $\Rset^2$ without conjugate points must be flat.    Part of Bangert's results  (e.g. foliation of the 2d torus by minimizing orbits) was extended to Tonelli  Hamiltonians in \cite{MS} for more general surfaces.  Combining with the Hopf-type rigidity result in \cite{Bialy} for magnetic Hamiltonian, when $n=2$,  it is easy to derive that  the level set of the $\overline H$ associated with the Ma\~ n\'e-type Hamiltonian (\ref{Fcell}) must contain flat pieces unless $V\equiv 0$.  We would like to point out that the non-strict convexity has not been established for general Tonelli Hamiltonian due to the lack of Hopf's rigidity result for Finsler metrics.  See \cite {Z, RGC} for instance.  

The main difficulty in our situation is that the effective burning velocity $\alpha$ is related to the effective Hamiltonian $\overline{H}$ through a variational formulation; see \eqref{gamma}.  In particular, the level set of $\alpha$ is not the same as that of $\overline H$ and Hopf-type rigidity results are not applicable.  In contrast to the proof  in \cite{B2},  we need to figure out the exact location of at least one flat piece in our proofs, which is of independent interest.

In this paper,  we establish some results concerning Question \ref{quest-1} when the flow is either very weak or very strong.  
The first theorem is for any dimension.  

\begin{theo}\label{main1}  Assume that $V$ is not constantly zero.  Then there exists $\ep_0>0$ such that  when $\ep\in  (0,  \ep_0)$, the level curve $S_{\ep}=\{p\in \Rset^n\,:\,\ \alpha_{\ep}(p)=1\}$ is not round (or equivalently, the function $\alpha_\ep$ is not Euclidean).   Here, $\alpha_\ep$ is the effective burning velocity associated with the flow velocity $\ep V$.  
\end{theo}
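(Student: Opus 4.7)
The strategy is a second-order perturbative expansion of $\overline H_\ep$ around $\ep=0$, from which the resulting directional dependence transfers to $\alpha_\ep$ via the variational formula \eqref{gamma}. Plugging the formal ansatz $u(x;p,\ep)=\ep w(x;p)+\ep^2 v(x;p)+O(\ep^3)$ and $\overline H_\ep(p)=|p|^2+\ep\,h_1(p)+\ep^2\,h_2(p)+O(\ep^3)$ into the cell problem \eqref{Fcell} for $\ep V$ and matching orders: at order $\ep$ one obtains $2p\cdot Dw+V\cdot p=h_1(p)$; integrating on $\Tset^n$ and using $\int V\,dx=0$ forces $h_1\equiv 0$ and leaves the transport equation $2p\cdot Dw=-V\cdot p$ on $\Tset^n$, solvable for $p$ Diophantine (explicitly $\hat w_k(p)=-(\hat V_k\cdot p)/(4\pi i\,p\cdot k)$ in Fourier series). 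At order $\ep^2$ the solvability condition combined with incompressibility of $V$ (which kills the cross term $\int V\cdot Dw\,dx$) yields
\[
h_2(p)=g(p):=\int_{\Tset^n}|Dw(x;p)|^2\,dx,
\]
a nonnegative function, positively homogeneous of degree $0$ in $p$, with $g(p)>0$ whenever $V\cdot p\not\equiv 0$ on $\Tset^n$.

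To make the expansion rigorous at a fixed Diophantine $p$, I would insert $p\cdot x+\ep w(x;p)+\ep^2 v(x;p)$ as an approximate corrector, bound the cell-problem residual by $O(\ep^3)$, and apply the comparison principle for the stationary Hamilton--Jacobi equation (as in the standard existence proof for $\overline H$) to sandwich
\[
\overline H_\ep(p)=|p|^2+\ep^2 g(p)+o(\ep^2).
\]
Plugging this into \eqref{gamma} and optimizing in $\lambda$ (using that $g$ is $0$-homogeneous) then gives
\[
\alpha_\ep(p)=2|p|\sqrt{1+\ep^2 g(p)}+o(\ep^2)=2|p|+\ep^2|p|\,g(p)+o(\ep^2).
\]
Hence, if $\alpha_\ep$ were Euclidean for arbitrarily small $\ep>0$, the function $g$ would have to be constant on the (full measure) set of Diophantine directions on the unit sphere.

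The final step is to rule out this constancy. Since $V\not\equiv 0$, some Fourier coefficient $\hat V_{k_0}$ with $k_0\neq 0$ is nonzero; incompressibility gives $k_0\cdot\hat V_{k_0}=0$, so $\hat V_{k_0}\cdot p$ does not vanish identically on the hyperplane $\{p\cdot k_0=0\}$. The explicit formula
\[
g(p)=\frac{1}{4}\sum_{k\in\Zset^n\setminus\{0\}}\frac{|k|^2\,|\hat V_k\cdot p|^2}{(p\cdot k)^2}
\]
(valid for Diophantine $p$) then shows that the $k_0$-term blows up along unit Diophantine vectors approaching $\{p\cdot k_0=0\}$ transversely, while $g$ stays finite on a generic Diophantine direction bounded away from all small-divisor hyperplanes. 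Thus $g$ is nonconstant on the unit sphere, contradicting the Euclidean assumption and completing the argument.

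I expect the main technical obstacle to be controlling the $o(\ep^2)$ remainder in $\overline H_\ep$ well enough to compare $\alpha_\ep$ at two chosen unit directions simultaneously: the small-divisor structure of the corrector $w(\cdot;p)$ makes the implicit constants degrade as $p$ approaches resonance, so the analysis must be localized to two fixed, well-separated, sufficiently Diophantine test directions from the outset, and one must verify that $w(\cdot;p)$ is smooth enough at these points to feed into the comparison argument.
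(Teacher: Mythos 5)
Your proposal is correct and follows essentially the same route as the paper: a rigorous second-order expansion $\overline H_\ep(p)=|p|^2+\ep^2 a_2(p)+O(\ep^3)$ at a Diophantine $p$ (the paper's Lemma~\ref{lem:exp-r}), transferred through the Legendre-type formula \eqref{gamma} to give $\alpha_\ep(p)=2|p|+\ep^2|p|a_2(p)+o(\ep^2)$ (the paper's Lemma~\ref{lem:MS-pert}), and then concluding from the non-constancy of $a_2$ on Diophantine directions. The only difference is that you spell out the non-constancy via the blow-up of the Fourier formula near resonance hyperplanes, which the paper simply asserts as obvious.
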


In two dimensional space, thanks to Lemma \ref{basic}, $\alpha(p)\in  C^1(\Rset^2\backslash\{0\})$.  We  prove further that the level curve of $\alpha$ is not strictly convex under small or strong advections. To state the result precisely, we recall that, for a set $S\subset  \Rset^n$, a point $p$ is said to be {\it a linear  point of $S$}  if there exists a unit vector $q$ and a positive number $\mu_0>0$ such that  the line segment $\{p+tq \,:\,  t\in  [0,  \mu_0]\}\subseteq S$.

\begin{theo}\label{main2}  Assume that $n=2$ and  $V$ is not constantly zero.  Then
\begin{itemize}
\item[(1)] {\upshape(weak flow)} there exists $\ep_0>0$ such that when $\ep\in  (0,  \ep_0)$,  the level curve $S_{\ep}=\{p\in  \Rset^2\,:\,  \alpha_{\ep}(p)=1\}$  contains flat pieces. Here, $\alpha_\ep$ is the effective burning velocity associated with the flow velocity $\ep V$.

\item[(2)] {\upshape(strong flow)} there exists $A_0>0$ such that when $A\geq A_0$,  the level curve $S_{A}=\{p\in  \Rset^2  \,:\, \alpha_{A}(p)=1\}$  contains flat pieces. 
Here, $\alpha_A$ is the effective burning velocity associated with the flow velocity $A V$.
 In particular, if  the flow $\dot \xi=V(\xi)$ has a swirl (i.e.,  a closed orbit that is not a single point),  any $p\in S_A$ which has a rational outward normal vector is a linear point of $S_A$ when $A\geq A_0$. 

\end{itemize}

\end{theo}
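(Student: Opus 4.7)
The plan is to reduce both parts to producing flat faces of the effective Hamiltonian $\overline{H}$ and then transfer them to $\alpha$ via convex duality. Setting $\overline{H}^*(\nu) := \sup_q(\nu \cdot q - \overline{H}(q))$, the constraint $\nu \cdot q \leq 1 + \overline{H}(q)$ for all $q$ characterizes $\{\overline{H}^* \leq 1\}$, so a standard interchange of inf and sup (applied after writing $\overline{H}(\lambda p)$ via its Legendre dual) gives
\begin{equation*}
\alpha(p) \;=\; \inf_{\lambda > 0}\frac{1 + \overline{H}(\lambda p)}{\lambda} \;=\; \sup\{\nu \cdot p : \overline{H}^*(\nu) \leq 1\}.
\end{equation*}
Thus $\alpha$ is the support function of the convex body $K := \{\overline{H}^* \leq 1\}$, and a flat piece of $S = \{\alpha = 1\}$ with outward normal $\nu_0$ corresponds precisely to a vertex of $K$ at $\nu_0$; equivalently, $\overline{H}^*$ is non-differentiable at $\nu_0$ with $\overline{H}^*(\nu_0) = 1$, equivalently $\overline{H}$ admits a flat face with outward normal $\nu_0$ at the matching level. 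The task therefore reduces to producing such flat faces in the weak-flow (small $\ep$) and strong-flow (large $A$) regimes, with the level adjusted via a rescaling that uses the $1$-homogeneity of $\alpha$.

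\textbf{Weak flow.} For Part (1), combine a perturbative viewpoint with Aubry--Mather techniques. A formal expansion $u = \ep u_1 + \ep^2 u_2 + \cdots$ in $|p + Du|^2 + \ep V\cdot(p+Du) = \overline{H}_\ep(p)$ shows that the first-order transport equation $2 p \cdot Du_1 = -V \cdot p$ on $\Tset^2$ becomes singular at directions $p \perp k$ for any Fourier mode $k$ with $\hat{V}(k) \neq 0$: the Fourier coefficients $\hat{u}_{1,k} = -(\hat{V}(k) \cdot p)/(4\pi i\, p\cdot k)$ blow up there. The nonexistence of a classical corrector at such $p$ is a signature of a defect of strict convexity of $\overline{H}_\ep$. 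Rigorously, at such a resonant direction one constructs an explicit Mather measure whose support persists under small cohomology-class perturbations parallel to that direction, forcing $\overline{H}_\ep$ to be affine on a small segment --- a flat face with outward normal identifiable from $k$. Since $V \not\equiv 0$ provides at least one such mode, this yields the required flat face, which is transferred to $S_\ep$ by the mechanism above.

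\textbf{Strong flow and main obstacle.} For Part (2), exploit the swirl. Let $\xi(t)$ be a closed orbit of $\dot{\xi} = V(\xi)$; lifted to $\Rset^2$ it satisfies $\xi(T) - \xi(0) = m$ for some period $T > 0$ and some $m \in \Zset^2 \setminus \{0\}$, giving rational rotation vector $\omega_0 := m/T$. For $A \gg 1$ the magnetic term $A V \cdot (p+Du)$ dominates the quadratic term, and the minimizing orbits associated with the cell problem localize on translates of $\xi$. Construct explicit subsolutions and supersolutions of the cell problem using an ansatz that is constant along $\omega_0^\perp$ on a tubular neighborhood of the swirl family; these agree to leading order, sandwich $\overline{H}_A$ on an open segment in $p$-space, and exhibit a flat face with outward normal $\omega_0/|\omega_0|$. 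For an arbitrary rational outward normal, an analogous construction using an Aubry--Mather periodic orbit of the prescribed rational rotation vector (extant for $A$ large in the 2D magnetic setting) produces the matching flat face, yielding the precise statement. The main obstacle, as flagged in the introduction, is the absence of a Hopf-type rigidity theorem, so flat pieces cannot be extracted abstractly from non-strict convexity; they must be located explicitly, which is exactly what the resonance analysis in (1) and the swirl-adapted corrector construction in (2) accomplish, combined with the homogeneity rescaling that matches the level $\overline{H}^*(\nu_0) = 1$ in the transfer.
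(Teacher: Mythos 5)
Your transfer mechanism is correct: since $\overline H(0)=0$ and $\overline H(p)\geq|p|^2$, the identity $\alpha(p)=\sup\{\nu\cdot p:\overline H^*(\nu)\leq1\}$ holds (the upper bound is immediate and the lower bound follows from part (1) of Lemma \ref{basic}, which produces $q\in\partial\overline H(\lambda_p p)$ with $\overline H^*(q)=1$ and $q\cdot p=\alpha(p)$). This is a legitimate convex-duality repackaging of the paper's Lemma \ref{basic}(3). But the actual content of the theorem --- locating a flat face of $\overline H$ at the right level --- is only sketched in your proposal, and the sketches have gaps.

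For the weak flow, the passage from ``the first-order corrector $\hat u_{1,k}$ has a small divisor at $p\perp k$'' to ``$\overline H_\ep$ is affine near a resonant direction'' is not established. Failure of the formal two-scale expansion at resonant $p$ does not on its own show a defect of strict convexity, and your claim that ``one constructs an explicit Mather measure whose support persists under small cohomology-class perturbations parallel to that direction'' is precisely the hard step, left undone. The paper argues by contradiction: if $p_\ep$ with normal $q_0$ were never a linear point, Theorem \ref{foliation} forces the projected Mather set at $\lambda_\ep p_\ep$ to be a foliation of $\Tset^2$ by periodic minimizers with rotation direction $q_0$; integrating the Euler--Lagrange equation along such an orbit through a suitable $x_0$ and sending $\ep\to0$ yields $\int_0^T q_0\cdot DV(x_0+q_0t)\,dt=0$, and Lemma \ref{rigid} (picking $q_0\perp k$ for each Fourier mode $k$) shows this holds for all admissible $(x_0,q_0,T)$ only if $V\equiv0$. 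Your resonance heuristic does identify the relevant rational direction, but the proof that it produces a flat face is missing.

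For the strong flow, there is a more serious misreading. The paper's ``swirl'' is a closed orbit that is contractible on $\Tset^2$, i.e.\ its lift to $\Rset^2$ is a bounded closed curve with $\xi(T)-\xi(0)=0$; you instead take a lift with $\xi(T)-\xi(0)=m\in\Zset^2\setminus\{0\}$ and rotation vector $\omega_0=m/T\ne0$, which is the opposite (non-contractible) case. Your ansatz ``constant along $\omega_0^\perp$ on a tubular neighborhood of the swirl family'' is therefore ill-posed for a genuine swirl, which has $\omega_0=0$. The mechanism in the paper's Case 1 is entirely different: the strip of contractible closed orbits near the swirl acts as an action barrier (Claim 1) that no unbounded minimizer of energy below $\bar cA^2$ can cross; combined with the bound $\lambda_{p,A}\lesssim\ep_0A$ from Lemma \ref{bend}, this prevents the Mather set from foliating $\Tset^2$ for any rational normal and forces, via Theorem \ref{foliation}, the flat piece. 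Moreover, the first sentence of part (2) does not assume a swirl; the paper splits into Case 1 (contractible closed orbit) and Case 2 (non-contractible periodic orbit), and Case 2 requires a delicate passage to the limit of rescaled minimizing trajectories $w_m(s)=\xi_m(s/A_m)$ converging to the winding orbit $\eta$, leading to $\int_0^{T_0}a(s)|DK(\eta(s))|\,ds=0$ with $a>0$ --- not a sub/supersolution sandwich. Your proposal does not address the dichotomy and does not carry out the construction in either case.
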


We conjecture that the above theorem holds for all amplitude parameters $A \in (0,\infty)$.  So far, we can only show this for some special flows.  Precisely speaking,  

\begin{theo}\label{main3} Assume either
\begin{itemize}
\item[(1)] {\upshape(shear flow)} $V(x)=(v(x_2), 0)$ for $x=(x_1,x_2) \in \Rset^2$, where $v:\Rset \to \Rset$ is a $1$-periodic smooth function with mean zero, or 

\item[(2)] {\upshape(cellular flow)} $V(x)=(-K_{x_2},  K_{x_1})$ with $K(x_1,x_2)=\sin (2\pi x_1) \sin (2\pi x_2)$ for $x=(x_1,x_2) \in \Rset^2$.  

\end{itemize}

\noindent Then for any fixed $A\ne 0$, the level curve $S_{A}=\{p\in  \Rset^2 \,:\,  \alpha_{A}(p)=1\}$  contains flat pieces.  
Here, $\alpha_A$ is the effective burning velocity associated with the flow velocity $A V$.

\end{theo}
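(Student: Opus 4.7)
The plan for both cases is to exhibit an \emph{affine wedge} for the effective Hamiltonian, i.e., an open cone $W\subset\Rset^2$ and a constant $B_A>0$ such that $\overline H_A(q)=q_1^2+B_A q_1$ (independent of $q_2$) for all $q\in W$. Once such a wedge is in place, a short computation in $\alpha_A(p)=\inf_{\lambda>0}(1+\overline H_A(\lambda p))/\lambda$ shows that the inner minimiser is $\lambda^*=1/p_1$, that $\lambda^* p$ remains inside $W$ whenever $|p_2/p_1|$ is sufficiently small, and that $\alpha_A(p)=p_1(2+B_A)$ on the resulting cone of $p$'s. Therefore $\{\alpha_A=1\}\cap W$ is a non-degenerate segment of the hyperplane $\{p_1=(2+B_A)^{-1}\}$, which is the required flat piece.

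\emph{Shear flow.} Plug $u=u(x_2)$ into (\ref{Fcell}): the cell problem becomes the ODE $(p_2+u'(x_2))^2=\overline H_A(p)-p_1^2-Av(x_2)p_1$. Taking $\overline H_A(p)=p_1^2+Av_{\max}p_1$ (with $v_{\max}=\max v$) and $u'(x_2)=-p_2+\sigma(x_2)\sqrt{Ap_1(v_{\max}-v)}$, where $\sigma\in\{\pm 1\}$ is chosen piecewise so that $\int_0^1 u'\,dx_2=0$, produces a Lipschitz subsolution of the cell problem whenever $|p_2|\le\sqrt{Ap_1}\,I$ with $I:=\int_0^1\sqrt{v_{\max}-v}\,dx_2>0$; this yields the upper bound on $\overline H_A$. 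The matching lower bound comes from the Mather variational representation $\overline H_A(p)\ge p\cdot\rho(\mu)-\int L_A\,d\mu$, tested against the closed measure $\mu=dx_1\otimes\delta_{x_2^*}\otimes\delta_{(Av_{\max}+2p_1,\,0)}$ with $v(x_2^*)=v_{\max}$, which attains exactly $p_1^2+Av_{\max}p_1$. The general mechanism of the preceding paragraph then produces the flat segment of $S_A$; for $A<0$, interchange $v_{\max}$ and $v_{\min}$.

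\emph{Cellular flow.} Here the construction must be genuinely two-dimensional. Two structural features of cellular flow are exploited simultaneously: (i) each cell is foliated by closed orbits of $V$, so every orbit measure is closed with $\rho=0$ and $\int L_A\,d\mu=0$; and (ii) each coordinate line $\{x_2=k\}$ is invariant under $V$, and the restriction of $V$ to it is the one-dimensional shear $V_1=-2\pi A\sin(2\pi x_1)$. Mixing an orbit measure with a ``drift'' measure supported on a thin two-dimensional strip around such a separatrix yields a closed Mather measure with rotation vector in the $e_1$ direction at controlled cost; optimising over the mixture gives a lower bound of the form $\overline H_A(p_1,0)\ge p_1^2+B_A p_1$ with an explicit $B_A>0$, which persists on a wedge by convexity. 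The matching upper bound is obtained by a two-scale viscosity subsolution of (\ref{Fcell}) that combines a shear-like profile $u(x_2)$ near the separatrix with a stream-function ansatz $u=f(K)$ inside the cells, glued on a transition strip. The main technical obstacle is precisely this matching in the cellular geometry---identifying the correct $B_A$ and checking that the gluing does not introduce an extra flux at the transition---after which the flat piece of $S_A$ follows from the general mechanism.
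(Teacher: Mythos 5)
Your shear-flow argument is essentially the paper's: the paper simply records the explicit one-dimensional formula $\overline H(p)=|p_1|^2+h(p_1,p_2)$, from which linearity in $p_2$ near $(s,0)$ is read off, and then invokes Lemma \ref{basic}(3) to pass from a flat piece of a level curve of $\overline H_A$ to a flat piece of $\{\alpha_A=1\}$. Your upper bound via the piecewise profile $u(x_2)$ and the lower bound via the periodic orbit sitting on the line $\{x_2=x_2^*\}$ where $v$ attains its maximum are a valid way to derive that formula, and the ``general mechanism'' you describe is the same reduction as Lemma \ref{basic}(3). So part (1) is fine.

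For the cellular flow, however, there is a genuine gap, and the route you sketch is not the one that makes the result work. You aim to exhibit an affine wedge on which $\overline H_A(q)=q_1^2+B_Aq_1$ with $B_A$ a constant independent of $q_1$; nothing in the cellular geometry supports this rigid quadratic form (the $p_1$-dependence of $\overline H_A(p_1,0)$ for cellular flows is not of this simple shape, and even the Xin--Yu asymptotics quoted in the paper point to logarithmic corrections). Fortunately, the strong form is unnecessary: all one needs is that $\overline H_A$ is constant along a short $p_2$-segment through every $(s,0)$, and Lemma \ref{basic}(3) then does the rest. But your proposed proof of even that weaker fact---a lower bound from mixed Mather measures matched by a glued two-scale subsolution---is exactly where you acknowledge an unresolved ``technical obstacle,'' and it is a real one: there is no reason the orbit-measure part and the boundary-layer part produce the same constant, and constructing a periodic $C^{0,1}$ subsolution of the full two-dimensional cell problem that is shear-like near the separatrix and of stream-function type in the cell interiors, with controlled flux at the transition, is precisely the kind of matching that is not obviously achievable. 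The paper avoids all of this by arguing indirectly (Proposition \ref{prop:M-2d}): using the symmetry $(y_1,y_2)\mapsto(y_1,-y_2)$, one can take a viscosity solution $v$ of the cell problem at $Q=(s,0)$ that is even in $y_2$; if $Q$ were \emph{not} a linear point, Theorem \ref{foliation} forces the projected Mather set to be all of $\Tset^2$, hence to contain the invariant line $\{y_2=0\}$, whence $v_{y_2}(\cdot,0)\equiv 0$; restricting the cell equation to that line yields an explicit ODE for $s+w'$ with $w=v(\cdot,0)$, and integrating it over the period produces $s\ge\sqrt{\overline H_A(Q)}\ge s$, forcing $A=0$. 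This contradiction-plus-symmetry argument is what you are missing, and it is much more robust than any attempt at an explicit formula for $\overline H_A$ in the cellular case.
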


We would like to point out that for the cellular flow in part (2) of Theorem \ref{main3},  it was derived by Xin and Yu \cite{XY} that 
$$
\lim_{A\to +\infty}{\log (A)\alpha_A(p)\over A}=C(|p_1|+|p_2|).
$$
for $p=(p_1,p_2)\in  \Rset^2$ and a fixed constant $C$.   See Remark \ref{geofront} for front motion associated with the Hamiltonian $H(p)=|p_1|+|p_2|$.    Moreover,  it remains an interesting question to at least extend the above global result to flows which have both shear and cellular structures, e.g., the cat's eye flow.  A prototypical example is $V(x)=(-K_{x_2},  K_{x_1})$ with $K(x_1,x_2)=\sin (2\pi x_1) \sin (2\pi x_2)+\delta \cos (2\pi x_1) \cos (2\pi x_2)$ for $\delta\in (0,1)$. 

\medskip

{\bf General inverse problems.}  In general, the effective burning velocity cannot determine the structure of the ambient fluid. The reason is that the function  $\alpha(p)$ is homogeneous of degree one and  only depends on  the value of $\overline{H}$ from (\ref{Fcell})  in a bounded domain.  So  $\alpha(p)$ cannot see the velocity field $V$ in places where it rotates very fast.  See Claim 1 in the proof of Theorem \ref{main2}.   See  also (9.5) in \cite{B1} for a related situation.  Nevertheless,  we can address the following  inverse type problem for the effective Hamiltonian $\overline H(p)$.

\begin{quest} \label{quest:main} Assume that $H_i(p,y)=|p|^2 + V_i(y) \cdot p$.  Assume further that $\overline{H}_1=\overline{H}_2$, where $\overline{H}_i$ is the corresponding effective Hamiltonian of $H_i$ for $i=1,2$.
Then what can we conclude about the relations between $V_1$ and $V_2$? Especially, can we identify
some common ``computable" properties shared by $V_1$ and $V_2$?
\end{quest}

This kind of questions was posed and studied first in Luo, Tran and Yu \cite{LTY1} for Hamiltonians of separable forms, i.e., when $H_i(p,y)=H(p)+W_i(y)$ for $i=1,2$.
Here $H(p)$ is the kinetic energy and $W_i$ is the potential energy. As discussed in \cite{LTY1}, a lot of tools from dynamical systems, e.g. KAM theory, Aubry-Mather theory, are involved in the study and the analysis of the problems. Using the approach of ``asymptotic expansion at infinity" introduced in \cite{LTY1},  we can show that if the Fourier coefficients of $V_i$ for $i=1,2$ decay very fast, then
\[
\overline{H}_1=\overline{H}_2 \quad
\Rightarrow \quad \int_{\Tset^n} |V_1|^2\,dy=\int_{\Tset^n}|V_2|^2\,dy.
\]
Since the proof is similar to that of (3) in Theorem 1.2 of \cite{LTY1}, we omit it here.

\subsection*{Outline of the paper}  For readers' convenience,  we give a quick review of  Mather sets and the weak KAM theory in Section 2.  Some basic properties of $\alpha(p)$ (e.g.  the $C^1$ regularity) will be derived as well. 
In Section 3,  we prove Theorems \ref{main1} via perturbation arguments. Theorems \ref{main2} and \ref{main3} will be established in Section 4.  
The use of two dimensional topology is extremely essential here and we do not know yet if the results of Theorems \ref{main2} and \ref{main3} 
can be extended to higher dimensional spaces.

\subsection*{Acknowledgement}
We would like to thank  Sergey Bolotin and Wei Cheng for helpful discussions about Aubry-Mather theory and Hopf's rigidity theorem.    We also want to thank  Rafael Ruggiero for pointing out the  work \cite{Bialy}.  The authors are very grateful to Alan R. Kerstein for helping us understand the notion of turbulent flame speed in combustion literature. 

\section{Preliminaries}   
For the reader's convenience, we briefly review some basic results concerning the Mather sets and the weak KAM theory. See  \cite{W-E, EG, F} for more details.   Let $\Bbb T^n=\Rset^n/\Zset^n$ be the $n$-dimensional   flat torus and $H(p,x)\in C^{\infty}(\Rset^n\times \Rset^n)$ be a Tonelli Hamiltonian, i.e., it satisfies that 
\begin{itemize}
\item[(H1)] (Periodicity) $x\mapsto H(p,x)$ is $\Bbb T^n$-periodic;

\item[(H2)] (Uniform convexity) There exists $c_0>0$ such that for all $\eta=(\eta_1,...,\eta_n)\in \Rset ^n$,
and $(p,x)\in \Rset^n \times \Rset^n$,
$$
\sum_{i,j=1}^{n}\eta_{i}{\partial ^2H\over \partial p_i\partial
p_j}\eta_{j}\geq c_0|\eta|^2.
$$
\end{itemize}
Let $L(q,x)=\sup_{p\in \Rset^n}\{p\cdot q-H(p,x)\}$ be the Lagrangian associated with $H$. Let $\mathcal{W}$ denote the set of all Borel probability
measures on $\Rset^n \times \Bbb T^n$ that are invariant under the corresponding Euler-Lagrangian
flow. 

For each fixed $p\in  \Rset^n$, an element $\mu$ in $\mathcal{W}$ is called a Mather measure if  
$$
\int_{\Rset^n\times \Bbb T^n} (L(q,x)-p\cdot q)\,d\mu=\min_{\nu\in  \mathcal{W}}\int_{\Rset^n\times \Bbb T^n}(L(q,x)-p\cdot q)\,d\nu,
$$
that is, if it minimizes the action associated to $L(q,x) - p\cdot q$. Denote by $\mathcal{W}_p$ the set of all such Mather measures. The value of the minimum action turns out to be $-\overline H(p)$, where $\overline{H}(p)$ is the unique real number such that the following Hamilton-Jacobi equation
\begin{equation}
  \label{cellH}
H(p+Du,x)=\overline H(p) \quad \text{in} \ \Tset^n
\end{equation}
has a periodic viscosity solution $u \in C^{0,1}(\Tset^n)$. 
Equation \eqref{cellH} is usually called the cell problem and $\overline H$ is called the effective Hamiltonian.

The Mather set is defined to be the closure of the union of the support of all Mather measures, i.e., 
$$
\widetilde{\mathcal{M}}_p=\overline {\bigcup_{\mu \in \mathcal{W}_p}\mathrm{supp}(\mu)}.
$$
The projected Mather set $\mathcal {M}_{p}$ is the projection of $\widetilde{\mathcal {M}}_p$ to $\Tset^n$. The following basic and important properties of the Mather set are used frequently in this paper. 

(1) For any viscosity solution $u$ of equation \eqref{cellH}, we have that
\be{}\label{graphsupport}
\widetilde{\mathcal{M}}_p \subset \{(q,x)\in \Rset ^n\times \Bbb
T^n \,:\,
 \text{$Du(x)$ exists and $p+Du(x)=D_{q}L(q,x)$}\}. 
\ee
Moreover $u\in C^{1,1}(\mathcal {M}_{p})$. More precisely, there exists a constant $C$ depending only on $H$ and $p$ such that,
for all $y\in \Tset^n$ and $x \in \mathcal{M}_p$,
\begin{align*}
&|u(y)-u(x)-Du(x)\cdot (y-x)| \leq C|y-x|^2,\\
&|Du(y)-Du(x)| \leq C|y-x|.
\end{align*}

(2) For any orbit $\xi:\Rset \to \Tset^n$ such that $(\dot \xi(t),\xi(t)) \in \widetilde{\mathcal{M}}_p$ for all $t \in \Rset$, we lift $\xi$ to $\Rset^n$ and denote the lifted orbit on $\Rset^n$ still by $\xi$. 
Then, $\xi$ is an absolutely minimizing curve with respect to
$L(q,x) - p\cdot q + \overline H(p)$ in $\Rset ^n$, i.e., for any
$-\infty<s_2<s_1<\infty$, $-\infty<t_2<t_1<\infty$ and $\gamma:[s_2,s_1]\to \Rset ^n$ absolutely continuous
satisfying $\gamma (s_2)=\xi (t_2)$ and $\gamma (s_1)=\xi (t_1)$
the following inequality holds, 
\be\label{abs}
 \int_{s_1}^{s_2} \left( L(\dot \gamma(s),\gamma(s)) +\overline H(p)\right)\,ds \geq 
 \int_{t_1}^{t_2} \left( L(\dot\xi(t),\xi (t))+\overline H(p) \right)\,dt. 
\ee
Moreover, if $\xi$ is a periodic orbit, then its rotation vector
\be\label{directionvector}
{\xi(T)-\xi(0)\over T}\in   \partial \overline H(p).
\ee
Here $T$ is the period of $\xi$ and $\partial \overline H(p)$ is the subdifferential of $\overline H$ at $p$, i.e.,  
$q\in  \partial \overline H(p)$ if  $\overline H(p')\geq \overline H(p)+q\cdot (p'-p)$ for all $p'\in \Rset^n$. 

A central problem in weak KAM theory is to understand the relation between  analytic properties of the effective Hamiltonian $\overline H$ and the underlying Hamiltonian system (e.g. structures of Mather sets).  For instance,  Bangert \cite{B1} gave a detailed characterization of  Mather and Aubry sets on the $2$-torus $\Tset^2$ for metric or mechanical Hamiltonians (i.e., $H(p,x)=\sum_{1\leq i,j\leq n}a_{ij}p_ip_j+W(x)$).   

Let us mention some known results in this direction which are more relevant to this paper.  
As an immediate corollary of   \cite[ Proposition 3]{C},  we have the following result concerning the level curves of $\overline H$ in two dimensional space.
\begin{theo}\label{curve}  
Assume that $n=2$. If  $\overline H(p)=c>\min \overline H$, then the set  $\partial \overline H(p)$ is a closed radial interval, i.e., there exist a unit vector $q\in  \Rset^2$ and $0<s_1\leq s_2$ such that  $\partial \overline H(p)=[s_1q, s_2q]$.   In particular, this implies that the  level set $\{p\in  \Rset^2\,:\,  \overline H(p)=c\}$ is a  closed $C^1$  convex curve and $q$ is the unit outward normal vector at $p$. 
\end{theo}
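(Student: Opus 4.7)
The plan is to exploit the two-dimensional topology of action-minimizing orbits to force every element of the subdifferential $\partial \overline H(p)$ to be a positive multiple of a single unit vector $q \in \Rset^2$. The closed convex set $\partial \overline H(p) \subset \Rset^2$ has extreme points that arise, by a Mather-type correspondence, as rotation vectors of ergodic Mather measures in $\mathcal{W}_p$: for periodic minimizers this is precisely \eqref{directionvector}, and the general ergodic case follows from Birkhoff's theorem applied to the coordinate functions together with the subgradient characterization $v \in \partial \overline H(p) \Leftrightarrow \overline H(p') \geq \overline H(p) + v\cdot(p'-p)$.

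A preliminary observation is that $0 \notin \partial \overline H(p)$. Indeed, if $0$ were a subgradient, then the subgradient inequality would yield $\overline H(p') \geq \overline H(p)$ for every $p' \in \Rset^2$, forcing $\overline H(p) = \min \overline H$ and contradicting the hypothesis. In particular, every rotation vector attached to $p$ is nonzero.

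The central step is positive parallelism of these rotation vectors. Let $v_1, v_2$ be two extreme points of $\partial \overline H(p)$, and let $\xi_1, \xi_2 : \Rset \to \Rset^2$ be generic orbits lifted from the supports of corresponding ergodic Mather measures. By \eqref{abs}, each $\xi_i$ is an absolute minimizer on the whole plane for the Lagrangian $L(\dot\gamma,\gamma) - p\cdot\dot\gamma + \overline H(p)$, and the rotation vectors arise as the asymptotic averages $\xi_i(T)/T$. The classical Aubry--Mather non-crossing principle on $\Rset^2$ asserts that two distinct absolute minimizers cannot cross transversally, since swapping tails at the intersection yields a strictly cheaper competitor. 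Combining this with the Mather graph theorem (the projected Mather set is a Lipschitz graph on $\Tset^2$) and the recurrence of the projected orbits, one shows that if $v_1$ and $v_2$ were either linearly independent or anti-parallel, then a suitable $\Zset^2$-translate of $\xi_2$ would be forced to cross $\xi_1$ transversally. Hence all extreme points of $\partial \overline H(p)$ lie on a common open ray $\Rset_+ q$ for some unit vector $q$, and the closed convex set $\partial \overline H(p)$ is a radial segment $[s_1 q, s_2 q]$ with $0 < s_1 \leq s_2$.

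The level-curve consequence is then standard convex analysis. Since $\overline H$ is convex and coercive ($\overline H(p) \geq |p|^2$), the sublevel set $\{\overline H \leq c\}$ with $c > \min \overline H$ is compact and convex with nonempty interior, so its boundary is a closed convex curve. The outward normal cone to this sublevel set at $p$ equals $\Rset_+ \partial \overline H(p) = \Rset_+ q$, a single direction, which by convex analysis is equivalent to the boundary being $C^1$ at $p$ with outward unit normal $q$. The principal obstacle in executing this plan is making the non-crossing argument fully rigorous when the rotation vectors are irrational: one must carefully combine the Lipschitz graph property of $\widetilde{\mathcal{M}}_p$ with a planar topological argument on the universal cover, which is exactly the content of \cite[Proposition 3]{C}.
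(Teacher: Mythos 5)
Your proposal ultimately takes the same route as the paper: the paper derives Theorem~\ref{curve} as an immediate corollary of Carneiro's Proposition~3, and you correctly identify that the hard step in your own sketch (ruling out linearly independent rotation vectors via a non-crossing argument on the universal cover, combined with the Mather graph theorem) is exactly the content of that proposition. The surrounding material in your write-up---that $0\notin\partial\overline H(p)$ since $\overline H(p)>\min\overline H$, that extreme points of $\partial\overline H(p)$ are rotation vectors of ergodic Mather measures, and that a radial subdifferential forces $C^1$ regularity of the level curve by elementary convex analysis---is a correct and sensible unpacking of why Carneiro's result gives the stated conclusion, with the only minor streamlining available being that the anti-parallel case follows directly from convexity of $\partial\overline H(p)$ together with $0\notin\partial\overline H(p)$, without invoking any crossing argument.
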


The following theorem was first proven in  \cite[Theorem 8.1]{EG}. It says that the effective Hamiltonian is strictly convex along any direction that is not tangent to the level set. 

\begin{theo}\label{EGone}   Assume that  $p_1$,  $p_2\in  \Rset^n$.  Suppose that $\overline H(p_2)>\min \overline H$ and $\overline H$ is linear along the line segment connecting $p_1$ and $p_2$.  Then 
$$
\overline H(tp_1+(1-t)p_2)\equiv \overline H(p_2)   \quad \text{for all $t\in  [0,1]$}.
$$
\end{theo}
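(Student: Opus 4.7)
The plan is to argue by contradiction: assume $\overline H$ is affine but not constant on $[p_1,p_2]$, produce a Mather measure that is simultaneously minimizing at every point of the segment, and then exploit the tension between the resulting non-vanishing rotation vector and the $\Tset^n$-periodicity of solutions of the cell problem.

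After relabeling, one may assume $c:=\overline H(p_2)-\overline H(p_1)>0$, and write $v:=p_2-p_1$. The affinity of $\overline H$ along the segment forces every $\rho\in\partial\overline H(p_t)$ at any interior $p_t:=p_1+tv$, $t\in(0,1)$, to satisfy $\rho\cdot v=c$. By upper semi-continuity of the subdifferential of the convex function $\overline H$, passing to the limit $t\to 1^-$ produces some $\rho^{\ast}\in\partial\overline H(p_2)$ with $\rho^{\ast}\cdot v=c>0$. Since every element of $\partial\overline H(p)$ is the rotation vector of a Mather measure at $p$, there exists $\mu\in\mathcal{W}_{p_2}$ with $\int q\,d\mu=\rho^{\ast}$. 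A direct computation using $\int(L-p_2\cdot q)\,d\mu=-\overline H(p_2)$, the identity $\rho^{\ast}\cdot v=c$, and the affinity $\overline H(p_s)=\overline H(p_2)+(s-1)c$ shows that $\int(L-p_s\cdot q)\,d\mu=-\overline H(p_s)$ for every $s\in[0,1]$, hence $\mu\in\mathcal{W}_{p_s}$ for every such $s$.

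To close the argument, let $u_1,u_2\in C^{0,1}(\Tset^n)$ solve $H(p_i+Du_i,x)=\overline H(p_i)$. Since $\mu\in\mathcal{W}_{p_1}\cap\mathcal{W}_{p_2}$, the graph property \eqref{graphsupport} applied at both endpoints yields $p_1+Du_1(x)=D_qL(q,x)=p_2+Du_2(x)$ for every $(q,x)\in\mathrm{supp}\,\mu$, and hence $Du_1(x)-Du_2(x)=v$ on the projected support. Lift an orbit $\xi:\Rset\to\Rset^n$ whose $(q,x)$-trajectory stays in $\mathrm{supp}\,\mu$. Using the Mather $C^{1,1}$ regularity of $u_i$ on the projected Mather set, the function
\begin{equation*}
\phi(t):=u_1(\xi(t))-u_2(\xi(t))-v\cdot\xi(t)
\end{equation*}
has vanishing derivative along $\xi$ and is therefore constant in $t$. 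However, the $\Zset^n$-periodicity of $u_1,u_2$ makes $u_1\circ\xi-u_2\circ\xi$ uniformly bounded, while the rotation vector identity $\rho^{\ast}\cdot v=c>0$ forces $v\cdot\xi(t)-v\cdot\xi(0)\sim ct\to\infty$ as $t\to\infty$; this is the contradiction.

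The main obstacle I anticipate is the regularity step: to differentiate $\phi$ along $\xi$ rigorously one needs $u_i$ to be differentiable at $\xi(t)$ with a valid chain rule, which is precisely Mather's $C^{1,1}$ regularity on the projected Mather set, the second clause following \eqref{graphsupport}. A secondary subtlety is ensuring that the lifted orbit $\xi$ is genuinely unbounded in $\Rset^n$, which rests on the link between the ergodic rotation vector $\rho^{\ast}$ of a Mather measure and the asymptotic displacement of its trajectories via \eqref{directionvector}. The hypothesis $\overline H(p_2)>\min\overline H$ enters implicitly in isolating the non-trivial case where the relabeling $c>0$ is meaningful: if both endpoints sat at the minimum of $\overline H$, convexity alone would force $\overline H$ to be constant (equal to $\min\overline H$) on the segment, and there would be nothing to prove.
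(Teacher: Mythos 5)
The paper does not supply a proof of Theorem \ref{EGone}; it cites \cite[Theorem 8.1]{EG}. Your strategy tracks the standard Aubry--Mather argument for that result quite closely: extract a Mather measure $\mu\in\mathcal{W}_{p_2}$ with rotation vector $\rho^{\ast}$ satisfying $\rho^{\ast}\cdot v=c$, verify via the affine formula for $\overline H(p_s)$ that $\mu$ remains a Mather measure at every $p_s$ on the segment, and then feed the graph property \eqref{graphsupport} at both endpoints to get $Du_1-Du_2\equiv v$ on $\mathrm{supp}\,\mu$. Those steps are all correct.

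The one genuine gap is the final blow-up step, and your own closing remark points at but does not actually resolve it. You take a single orbit $\xi$ with trajectory in $\mathrm{supp}\,\mu$ and assert $v\cdot\xi(t)\sim ct\to\infty$, citing \eqref{directionvector}. But \eqref{directionvector} is stated only for \emph{periodic} orbits, and there is no reason for $\xi$ to be periodic; moreover, if $\mu$ is not ergodic, the long-time displacement of a particular orbit in its support need not reflect $\rho^{\ast}$ at all. Two standard repairs. (a) Pass to an ergodic component $\mu'$ of $\mu$ with $\rho(\mu')\cdot v>0$; such a component exists because the ergodic average of $\rho(\mu_\omega)\cdot v$ equals $c>0$, and $\mu'$ is still a Mather measure at every $p_s$ because the minimizers of a linear functional on $\mathcal{W}$ form a face, hence contain all ergodic components of their elements; Birkhoff's theorem then gives $v\cdot\xi(t)/t\to\rho(\mu')\cdot v>0$ for $\mu'$-a.e.\ orbit. (b) Cleaner: integrate the pointwise identity $(Du_1-Du_2-v)\cdot q=0$ against $\mu$. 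Since $u_i\in C^{0,1}(\Tset^n)$ is $C^{1,1}$ on the projected Mather set and $\mu$ is flow-invariant, $\int Du_i(x)\cdot q\,d\mu=0$; hence $0=v\cdot\int q\,d\mu=\rho^{\ast}\cdot v=c$, a contradiction with no ergodic decomposition needed. Option (b) is how the argument is usually closed in \cite{EG}, and also sidesteps the question of whether a fixed orbit in $\mathrm{supp}\,\mu$ is unbounded, which your $\phi(t)$ device only establishes after (a) or (b) is already in place.
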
  

In dynamical system literature,  the effective Hamiltonian $\overline H$ and its Lagrangian $\overline L$ are often called {\it $\alpha$ and $\beta$ functions} respectively.  Since $Q\in  \partial \overline H(P)$  $\Leftrightarrow$  $P\in  \partial \overline L(Q)$,   that $\overline L$ is not differentiable at $Q$ implies that $\overline H$ is linear along any two vectors in $\partial \overline L(Q)$.  Accordingly, as an immediate outcome of   \cite[Corollary 1]{MS},  we have that

\begin{theo}\label{foliation}  Let $n=2$, $c>\min \overline H$ and $p\in  \Gamma_c=\{\overline H=c\}$.   If the unit normal vector of $\Gamma_c$ at $p$ is a rational vector and $p$ is not a linear point of  $\Gamma_c$, then  $\mathcal {M}_{p}$ consists of periodic orbits which foliate $\Bbb T^2$. 
\end{theo}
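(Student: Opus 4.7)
The plan is to (i) use Theorem~\ref{curve} to identify that every orbit in $\widetilde{\mathcal{M}}_p$ has rotation vector a positive multiple of the rational vector $q$, (ii) deduce periodicity from this rationality via the graph property, and (iii) promote periodicity to a foliation of $\Tset^2$ via the two-dimensional Mather set analysis of \cite[Corollary 1]{MS}, using the ``not a linear point'' hypothesis to exclude annular gaps.

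First I would invoke Theorem~\ref{curve}: since $c=\overline{H}(p)>\min \overline{H}$, we have $\partial \overline{H}(p)=[s_1 q, s_2 q]$ with $0<s_1\le s_2$ and $q$ the unit outward normal to $\Gamma_c$ at $p$, which is rational by hypothesis. The Legendre--Fenchel identification of rotation vectors with elements of $\partial \overline{H}(p)$, combined with \eqref{directionvector} for periodic orbits and its standard extension to general minimizing measures in $\mathcal{W}_p$, forces every orbit $\xi\subset \widetilde{\mathcal{M}}_p$ to have asymptotic direction in $[s_1 q, s_2 q]$, hence parallel to the rational vector $q$.

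Next, rationality of $q$ together with the graph property \eqref{graphsupport} implies that each such $\xi$ is periodic on $\Tset^2$: lifting to $\Rset^2$ and using that $\xi$ lies on the Lipschitz graph $\{(p+Du(x),x)\}$, a rational asymptotic direction prevents self-intersection only if the lift closes up modulo $\Zset^2$ after finite time. We therefore obtain a family of closed minimizing curves on $\Tset^2$, all in the same homology class determined by $q$.

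Finally, to upgrade ``every orbit is periodic'' to ``the projected orbits foliate $\Tset^2$'', I would argue by contradiction. If $\mathcal{M}_p\subsetneq \Tset^2$, its complement is a nonempty open invariant annular region bounded by periodic orbits in $\widetilde{\mathcal{M}}_p$; the surface-topological analysis of \cite{MS} produces, inside this gap, an additional minimizing orbit with the same rotation direction, which corresponds via duality to a point $p'\in \partial \overline{L}(sq)\setminus\{p\}$ for some $s\in [s_1,s_2]$. Then $sq\in \partial \overline{H}(p)\cap \partial \overline{H}(p')$ forces $\overline{H}$ to be linear on the segment $\{tp+(1-t)p':t\in [0,1]\}$, so Theorem~\ref{EGone} places this entire segment in $\Gamma_c$, making $p$ a linear point of $\Gamma_c$---a contradiction. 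The main obstacle is exactly this last step: the production of the extra minimizer inside the annular gap relies on non-crossing of minimizing orbits and the closing properties of Aubry-Mather sets on surfaces, a genuinely two-dimensional phenomenon which the authors outsource to \cite[Corollary 1]{MS} and which explains the restriction $n=2$.
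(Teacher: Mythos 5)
Your strategy is the same as the paper's: the result is a direct consequence of \cite[Corollary~1]{MS} combined with Legendre duality ($Q\in\partial\overline H(P)\Leftrightarrow P\in\partial\overline L(Q)$) and Theorem~\ref{EGone}, and your step~(iii) assembles these ingredients correctly. The contrapositive framing you use is logically equivalent to the paper's direct chain ($p$ not a linear point of $\Gamma_c$ $\Rightarrow$ $\overline L$ differentiable at the rational direction $sq$ $\Rightarrow$ foliation via \cite{MS}), and your use of Theorem~\ref{EGone} to promote ``$\overline H$ linear on $[p,p']$'' to ``$[p,p']\subset\Gamma_c$'' is exactly the point the paper is making.

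The one flaw is in step~(ii). The inference ``rationality of $q$ together with the graph property \eqref{graphsupport} implies that each orbit in $\widetilde{\mathcal{M}}_p$ is periodic'' is not a valid deduction: sitting on a Lipschitz graph over $\Tset^2$ and having a rational asymptotic direction does not by itself force an orbit to close up. (Orbits heteroclinic between two periodic minimizers, for instance, are non-self-intersecting, live on the same Lipschitz graph, and have the same rational rotation vector, yet are not periodic; ruling such orbits out of the support of a Mather \emph{measure} requires the recurrence/minimization structure, not just the graph property.) Periodicity of the orbits comprising $\mathcal{M}_p$ at a rational rotation direction is already part of the conclusion of \cite[Corollary~1]{MS}, which you cite anyway in step~(iii), so step~(ii) is not load-bearing for your argument. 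Still, as written the reasoning there is incorrect and the ``prevents self-intersection only if the lift closes up'' claim should be deleted: do not try to derive periodicity before invoking \cite{MS}.
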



For metric or  mechanical Hamiltonians (i.e.,  $H(p,x)=\sum_{1\leq i,j\leq n}a_{ij}p_ip_j+W(x)$), the above result was first established in  \cite{B2}. 

Finally, we prove some simple properties of the effective burning velocity $\alpha(p)$. 

\begin{lem}\label{basic} Fix $p\in  \Rset^n\backslash\{0\}$.   The followings hold
\begin{itemize}
\item[(1)]  $\alpha:\Rset^n\to \Rset$ is convex.  Also, there exists a unique $\lambda_{p}>0$ such that
$$
\alpha(p)={1+\overline H(\lambda_pp)\over \lambda_p}.
$$
Moreover,    there exists $q\in  \partial \overline H(\lambda_{p}p)$ such that
$$
q\cdot \lambda_{p}p=\overline H(\lambda_{p}p)+1.
$$

\item[(2)] Assume that $n=2$. Then $\alpha(p)\in C^1(\Rset^n\backslash \{0\})$. 

\item[(3)] Assume that $n=2$.  Then $p$ is a  linear point of the level curve $\{\alpha=1\}$ if and only if $\lambda_pp$ is a linear point of  the level curve $\{\overline H=\lambda_p-1\}$.

\end{itemize}
\end{lem}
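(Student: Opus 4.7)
The plan is to handle the three parts in sequence, using convex duality for (1) and (2) and combining Theorems \ref{EGone} and \ref{curve} for (3).

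For (1), I would rewrite $\alpha(p)=\inf_{t>0}G(t,p)$ where $G(t,p)=t+t\overline H(p/t)$ is the perspective of $1+\overline H$, hence jointly convex on $\{t>0\}\times\Rset^n$; convexity of $\alpha$ then follows from marginalization. The function $g(\lambda):=(1+\overline H(\lambda p))/\lambda$ blows up both as $\lambda\to 0^+$ and as $\lambda\to\infty$ (the latter using $\overline H(\lambda p)\geq\lambda^2|p|^2$), giving existence of a minimizer $\lambda_p$. For uniqueness, two distinct minimizers $\lambda_a<\lambda_b$ would force $\overline H(\lambda p)=\alpha(p)\lambda-1$ to be linear on $[\lambda_a,\lambda_b]$; Theorem \ref{EGone} would upgrade this to constancy of $\overline H$ on $[\lambda_a p,\lambda_b p]$, but then the slope of $\lambda\mapsto\overline H(\lambda p)$ at $\lambda_a^-$ would be nonpositive by convexity, forcing $\overline H(\lambda_a p)\leq\overline H(0)=0$, which contradicts $\overline H(\lambda_a p)\geq|\lambda_a p|^2>0$. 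Finally, the existence of $q$ is the first-order condition $0\in\partial_\lambda g(\lambda_p,p)$ unpacked through the subdifferential chain rule.

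For (2), the subdifferential formula for a marginal of a jointly convex function at its unique minimizer gives
\[
\partial\alpha(p)=\{\,q\in\partial\overline H(\lambda_p p):\ q\cdot\lambda_p p=1+\overline H(\lambda_p p)\,\}.
\]
Since $\overline H(\lambda_pp)>0=\min\overline H$, Theorem \ref{curve} gives $\partial\overline H(\lambda_pp)=[s_1q_0,s_2q_0]$ for the unit outward normal $q_0$. The affine constraint singles out at most one scalar in the radial interval (noting $q_0\cdot p>0$ since $q_0$ is outward and the origin lies in the sublevel set), and Part (1) guarantees it is achieved. Hence $\partial\alpha(p)$ is a singleton, $\alpha$ is differentiable everywhere on $\Rset^2\setminus\{0\}$, and convex plus everywhere differentiable implies $C^1$.

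For the forward implication in (3), write $p_t=p+tr$ and $\lambda_0=\lambda_p$. Because $\alpha$ is constant on the segment, $C^1$, and $1$-homogeneous, Euler's identity $D\alpha(p_t)\cdot p=1$ together with $D\alpha(p_t)\cdot r=0$ determines $D\alpha(p_t)$ uniquely and independently of $t$; call this constant vector $q_*$. The envelope formula from (2) yields $q_*\in\partial\overline H(\lambda_t p_t)$ with $q_*\cdot\lambda_t p_t=\lambda_t$, so each $\lambda_t p_t$ lies in the set $E_{q_*}:=\{Q:q_*\in\partial\overline H(Q)\}$, on which $\overline H$ is affine with gradient $q_*$. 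Theorem \ref{EGone} then forces $\overline H$ to be constant on any segment of $E_{q_*}$, so $E_{q_*}$ sits in a single hyperplane perpendicular to $q_*$; consequently $\lambda_t=q_*\cdot\lambda_t p_t$ is $t$-independent, i.e. $\lambda_t\equiv\lambda_0$, and $\{\lambda_0 p+\lambda_0 tr\}\subset\{\overline H=\lambda_0-1\}$ is a linear piece through $\lambda_0 p$. For the reverse implication, given a segment $\{\lambda_0 p+sr\}_{s\in[0,\mu_0']}\subset\{\overline H=\lambda_0-1\}$, the $C^1$ level curve has tangent $r$ at $\lambda_0 p$, so the outward normal $q_0$ there is perpendicular to $r$; Part (1) selects the unique $q_*\in\partial\overline H(\lambda_0 p)=[s_1q_0,s_2q_0]$ with $q_*\cdot\lambda_0 p=\lambda_0$, which automatically satisfies $q_*\cdot p=1$ and $q_*\cdot r=0$. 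Thanks to this orthogonality and the constancy of $\overline H$ along the segment, the defining subgradient inequality for $q_*\in\partial\overline H(\lambda_0 p)$ is literally the same inequality as $q_*\in\partial\overline H(\lambda_0 p+sr)$, so $q_*\in\partial\overline H(\lambda_0 p_t)$ for $t\in[0,\mu_0'/\lambda_0]$; evaluating at $Q=\mu p_t$ gives $\overline H(\mu p_t)\geq\mu-1$ for all $\mu>0$, hence $\alpha(p_t)\geq 1$, while choosing $\mu=\lambda_0$ in the definition of $\alpha(p_t)$ gives $\alpha(p_t)\leq 1$. Together $\alpha(p_t)=1$ on this range, so $p$ is a linear point of $\{\alpha=1\}$.

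The main obstacle is part (3): a priori the envelope map $p\mapsto\lambda_p p$ could deform a linear segment of $\{\alpha=1\}$ into a curved arc in a level curve of $\overline H$, or conversely the tangent of the level curve might fail to produce a flat piece of $\alpha$. The key mechanism that tightens the correspondence is Theorem \ref{EGone}'s ``linear implies constant'' for $\overline H$: it pins the exposed face $E_{q_*}$ into a hyperplane perpendicular to $q_*$, which is precisely what forces $\lambda_t$ to be constant in the forward direction and makes the subgradient inequality translate rigidly along the flat piece in the reverse direction.
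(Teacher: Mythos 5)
Your proof is correct, and it follows the same structural skeleton as the paper's (the key inclusion $\partial\alpha(p)\subseteq\partial\overline H(\lambda_p p)$, plus Theorem \ref{curve} and Theorem \ref{EGone}), but several steps take a genuinely different route. For (1) you obtain convexity from joint convexity of the perspective $t+t\overline H(p/t)$, whereas the paper interpolates directly via ${1/\lambda_\theta}=\theta/\lambda_1+(1-\theta)/\lambda_0$; your uniqueness argument is also a touch roundabout: once Theorem \ref{EGone} gives constancy of $\overline H$ on $[\lambda_a p,\lambda_b p]$, this already contradicts the strictly increasing identity $\overline H(\lambda p)=\alpha(p)\lambda-1$ (since $\alpha(p)\geq 2|p|>0$), so the excursion $\lambda\to 0^+$ is unnecessary. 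For (2) your explicit two-sided characterization $\partial\alpha(p)=\{q\in\partial\overline H(\lambda_p p):\ q\cdot\lambda_p p=1+\overline H(\lambda_p p)\}$ is a slight strengthening of what the paper records (the paper only uses the inclusion ``$\subseteq$'') and makes the singleton conclusion immediate. For (3) the biggest divergence is in the forward direction: the paper extracts a common subgradient in $\partial\alpha(p)\cap\partial\alpha(p')\subseteq\partial\overline H(\lambda_p p)\cap\partial\overline H(\lambda_{p'}p')$, deduces that $\overline H$ is linear on $[\lambda_p p,\lambda_{p'}p']$, then constant by Theorem \ref{EGone}, hence $\lambda_p=\lambda_{p'}$; this argument needs neither $C^1$ nor dimension two, which is why the paper flags it as valid in any $n$. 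Your route via the exposed face $E_{q_*}$ and its containment in a hyperplane orthogonal to $q_*$ is correct for $n=2$ (it uses $D\alpha$ throughout), but you should explicitly note that $E_{q_*}=\partial\overline H^*(q_*)$ is convex before invoking Theorem \ref{EGone} on its segments; as written, this step is asserted rather than justified. The reverse direction is essentially equivalent to the paper's: both hinge on the orthogonality of the selected subgradient to the flat direction $r$, you evaluating the subgradient inequality at $\mu p_t$ directly, the paper working through the majorant $G(q)=(1+\overline H(\lambda_p q))/\lambda_p$.
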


\begin{proof}
 (1) Since $\overline H(p)\geq |p|^2$, the existence of $\lambda_p$ is clear. For the convexity of $\alpha$, fix $p_0$, $p_1\in  \Rset^n\backslash\{0\}$ and choose $\lambda_0$, $\lambda_1>0$ such that
$$
\alpha(p_0)={1+\overline H(\lambda_0p_0)\over \lambda_0}  \quad \mathrm{and} \quad \alpha(p_1)={1+\overline H(\lambda_1p_1)\over \lambda_1}.
$$
For $\theta\in  [0,1]$,   write $p_{\theta}=\theta p_1+(1-\theta)p_0$.  If $p_{\theta}=0$, the convexity is obvious since $\alpha(0)=0$ and $\alpha(p)\geq 2|p|$.  So we assume $p_\theta\not= 0$.   Choose $\lambda_{\theta}>0$ such that ${1\over \lambda_{\theta}}={\theta\over \lambda_1}+{1-\theta\over \lambda_0}$.  It follows immediately from the definition of $\alpha$ and the convexity of $\overline H$ that 
$$
\alpha(p_\theta)\leq {1+\overline H(\lambda_\theta p_\theta)\over \lambda_\theta}\leq \theta \alpha(p_1)+(1-\theta)\alpha(p_0).
$$
The convexity of $\alpha$ is proved.

Next we prove the uniqueness of $\lambda_p$.   Assume that for $\lambda$, $\bar \lambda>0$, we have that
$$
\alpha(p)={1+\overline H(\lambda p)\over \lambda }={1+\overline H(\bar \lambda p)\over \bar \lambda}.
$$
Then $\partial \alpha(p)\subseteq \partial \overline H(\lambda p)$ and $\partial \alpha(p)\subseteq \partial \overline H(\bar \lambda p)$.  Therefore,   $\partial \overline H(\lambda p)\cap \partial \overline H(\bar \lambda p)\ne\emptyset$.  So $\overline H$ is linear along the line segment connecting $\lambda p$ and $\bar \lambda p$.   Then by Theorem \ref{EGone},  $\overline H(\lambda p)=\overline H(\bar \lambda p)$, which immediately leads to $\lambda=\bar \lambda$.

Next we prove the second equality in Claim (1).   For $\lambda>0$, denote by $w(\lambda)=\overline H(\lambda p)\geq \lambda^2 |p|^2$ and
 \[
 h(\lambda)= {1+w(\lambda)\over \lambda}.
 \]  
 Since $w(\lambda)$ is convex,  there exists a decreasing sequence $\{\lambda_m\}$ such that $\lambda_m \downarrow  \lambda_{p}$ and $w$ is differentiable at $\lambda_m$ and $h'(\lambda_m)\geq 0$.  Clearly,  
\[
w'(\lambda_m)=q_m\cdot  p \quad \text{for any} \ q_m\in  \partial \overline H(\lambda_m p).
\]  
Up to a subsequence, we may assume that $q_m\to q^{+}\in  \partial \overline H(\lambda_pp)$.  
Then  in light of the fact that $h'(\lambda_m)\geq 0$, we deduce
\[
q^{+}\cdot \lambda_{p}p\geq \overline H(\lambda_{p}p)+1.
\]
  Similarly,  by considering an increasing sequence that converges to $\lambda_p$,  
  we can pick $q^{-}\in  \partial \overline H(\lambda_pp)$ such that 
  \[
  q^{-}\cdot \lambda_{p}p\leq \overline H(\lambda_{p}p)+1. 
  \] 
 Since $\partial H(\lambda_pp)$ is a convex set,  we can find $q\in  \partial \overline H(\lambda_{p}p)$ which satisfies 
 \[
 q\cdot \lambda_{p}p=\overline H(\lambda_{p}p)+1.
 \]

(2)   Apparently,  
\begin{equation}
\label{eq:DaDH}
\hat q\in  \partial \alpha(p)  \quad  \Rightarrow  \quad  \hat q\in  \partial \overline H(\lambda_pp).
\end{equation}
Owing to Theorem \ref{curve},  $\partial \alpha(p)$ is also a closed radial interval.  Since $\alpha(p)$ is homogeneous of degree $1$,  any $q\in \partial \alpha(p)$ satisfies $p\cdot q=\alpha(p)$. Since $p \ne 0$ and $\alpha(p) > 0$, this interval  can only contain a single point; it follows that $\alpha$ is differentiable at $p$. 

(3)  ``$\Rightarrow$": This part is true in any dimension.   Clearly,  that $p$ is a linear point of $S=\{\alpha=1\}$ implies that there exists $p'\in S$ such that $p\not= p'$ and 
$$
\partial \alpha(p)\cap \partial \alpha(p')\not= \emptyset.
$$
By (\ref{eq:DaDH}),  $\partial \alpha(p)\subseteq  \partial \overline H(\lambda_pp)$ and  $\partial \alpha(p')\subseteq  \partial \overline H(\lambda_{p'}p')$.  Hence $\overline H$ is linear along the line segment connecting $\lambda_pp$ and $\lambda_{p'}p'$.   Then Theorem \ref{EGone}  implies that $\overline H(\lambda_p p)=\overline H(\lambda_{p'}p')$ and $\lambda_p=\lambda_{p'}$.  The necessity then follows.  

Now we prove the sufficiency which relies on the $2$-dimensional topology. For $p\in  \Rset^2$,  assume that  $\lambda_pp$ is a linear point of  the level curve $C_p=\{\overline H=\lambda_p-1\}$, i.e.,  there exists a distinct vector $\lambda_pp'\in C_p$ such that the line segment $l_{p,p'} = \{s p + (1-s)p' \,:\, s \in [0,1]\}$, which connects $p$ and $p'$, satisfies $l_{p,p'}\subset \{G=1\}$.  Here  for $q\in  \Rset^2$, 
$$
G(q)={1+\overline H(\lambda_pq)\over \lambda_p}\geq \alpha(q).
$$
By Theorem \ref{curve} and $D\alpha(p)\in \partial G(p)=\partial \overline H(\lambda_pp)$,  we have that 
$$
\partial G(p)=\{sD\alpha(p) \,:\,   s\in  [\theta_1,  \theta_2]\}
$$
for some $0<\theta_1\leq \theta_2$. Therefore $D\alpha(p)\cdot (p'-p)=0$, which implies that 
$$
1=G(q)\geq \alpha(q)\geq \alpha(p)+D\alpha(p)\cdot (q-p)=\alpha(p)=1
$$
for any $q\in  l_{p,p'}$.  Hence $l_{p,p'}\subset \{\alpha=1\}$ and $p$ is a linear point.  

\end{proof}

The following result characterizes the shape of the moving front when the initial front is the unit circle in $\Rset^2$.  

\begin{theo}\label{frontmove} 
Suppose that $n=2$ and $\alpha : \Rset^2 \to  \Rset$ is convex, coercive and positive homogeneous of degree $1$.   
Let $u\in C(\Rset^2\times [0, +\infty))$ be the unique viscosity solution to
$$
\begin{cases}
u_t + \alpha (Du)=0\quad \text{in $\Rset^n\times (0,  +\infty)$}\\
u(x,0)=|x|-1.
\end{cases}
$$
Then $u(x,t)=\max \{-t \alpha (p)+x\cdot p \,:\, |p|\leq 1\}-1$ and its zero level set is
\be\label{levelformu}
\Gamma_t :=\{x \in \Rm^2 \,:\, u(x,t)=0\}=\{p+tq \,:\,  p \in S^1,\  q\in  \partial \alpha (p)\}.
\ee
Also $\Gamma_t$ is $C^1$.  Moreover, 
\be\label{duality}
\text{$\alpha\in C^1(\Rset^2\backslash\{0\})$   $\Longleftrightarrow$   $\Gamma_t$ is strictly convex}. 
\ee
\end{theo}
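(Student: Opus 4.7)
The plan is to derive the representation formula via the Hopf formula, then analyze the zero level set explicitly, and finally read off both the $C^1$ regularity of $\Gamma_t$ and the announced dichotomy from convex geometry in $\Rset^2$.

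\emph{Step 1 (representation).} Since the initial datum $u_0(x)=|x|-1$ is convex, Lipschitz and coercive, I would invoke the Hopf formula for the viscosity solution of $u_t+\alpha(Du)=0$, namely $u(x,t)=\sup_{p\in\Rset^2}\{x\cdot p-t\alpha(p)-u_0^{*}(p)\}$. A direct Legendre transform gives $u_0^{*}(p)=1$ if $|p|\le 1$ and $+\infty$ otherwise, so $u(x,t)=\max_{|p|\le 1}\{x\cdot p-t\alpha(p)\}-1$, which is the announced formula.

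\emph{Step 2 (zero level set).} For the inclusion ``$\supset$'' in \eqref{levelformu}: given $x=p+tq$ with $p\in S^1$ and $q\in\partial\alpha(p)$, the 1-homogeneity of $\alpha$ yields $p\cdot q=\alpha(p)$, and for any $|p'|\le 1$ the subgradient inequality together with $q\cdot p=\alpha(p)$ gives $x\cdot p'-t\alpha(p')\le |p'|+t(q\cdot p'-\alpha(p'))\le 1$, with equality at $p'=p$, hence $u(x,t)=0$. For ``$\subset$'', let $p^{*}$ achieve the max at a point $x\in\Gamma_t$; since $s\mapsto s(x\cdot p^{*}/|p^{*}|-t\alpha(p^{*}/|p^{*}|))$ is linear on $[0,1/|p^{*}|]$ and its maximum equals $1>0$, the optimum is attained at the endpoint, forcing $|p^{*}|=1$. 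The KKT condition at the constrained maximum reads $x\in t\,\partial\alpha(p^{*})+\lambda p^{*}$ with $\lambda\ge 0$, and pairing with $p^{*}$ pins down $\lambda=1$, so $x=p^{*}+tq^{*}$ with $q^{*}\in\partial\alpha(p^{*})$.

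\emph{Step 3 ($C^1$ regularity of $\Gamma_t$).} The key step is uniqueness of the ``inverse Gauss map''. Suppose $x=p_1+tq_1=p_2+tq_2$ with $p_i\in S^1$ and $q_i\in\partial\alpha(p_i)$. Monotonicity of the subdifferential of the convex function $\alpha$ gives $(q_1-q_2)\cdot(p_1-p_2)\ge 0$, while $t(q_1-q_2)=p_2-p_1$ yields $(q_1-q_2)\cdot(p_1-p_2)=-|p_1-p_2|^2/t\le 0$, so $p_1=p_2$. The set $\{u(\cdot,t)\le 0\}$ is convex (being a sublevel set of the convex function $x\mapsto\max_{|p|\le 1}\{x\cdot p-t\alpha(p)\}$), and the computation above identifies $p^{*}\in S^1$ as the unique outward unit normal to this body at the point $x\in\Gamma_t$; uniqueness of the supporting hyperplane at every boundary point is equivalent to $\Gamma_t$ being $C^1$.

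\emph{Step 4 (dichotomy).} For ``$\Rightarrow$'': if $\Gamma_t$ contained a nondegenerate segment, every point of the segment would share the same outward normal $p_0\in S^1$, and the $C^1$ hypothesis on $\alpha$ makes $\partial\alpha(p_0)=\{D\alpha(p_0)\}$ a singleton, so by the parametrization the alleged segment collapses to $p_0+tD\alpha(p_0)$, a contradiction. For ``$\Leftarrow$'': if $\alpha$ fails to be differentiable at some $p_0$, one may rescale so that $p_0\in S^1$; then $\partial\alpha(p_0)$ contains two distinct points $q_1,q_2$, and the convexity of $\partial\alpha(p_0)$ together with \eqref{levelformu} forces the whole segment $\{p_0+t((1-s)q_1+sq_2):s\in[0,1]\}$ to lie in $\Gamma_t$, contradicting strict convexity. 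The main subtlety I anticipate is the identification in Step 3 of the Hopf maximizer $p^{*}$ with the geometric outward normal of $\Gamma_t$; once that is clean, Steps 2 and 4 become bookkeeping within planar convex geometry.
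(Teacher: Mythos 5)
Your proposal is correct and follows essentially the same route as the paper: Hopf's formula for the representation, Lagrange-multiplier/KKT analysis on the unit ball to obtain \eqref{levelformu}, monotonicity of $\partial\alpha$ to get uniqueness of the normal direction $p_x$ at each $x\in\Gamma_t$, and the bijection between $S^1$ and $\Gamma_t$ to read off \eqref{duality}. The only cosmetic deviation is that you invoke the abstract convex-geometry equivalence ``unique supporting hyperplane at every boundary point $\Longleftrightarrow$ $C^1$ boundary,'' whereas the paper phrases the same step via continuity of the Gauss map $x\mapsto p_x$; also note a small slip in Step 2, where the domain of the one-variable restriction should be $[0,1]$ rather than $[0,1/|p^*|]$ for the parametrization $p=s\,p^*/|p^*|$ that you wrote, though the conclusion $|p^*|=1$ is unaffected.
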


\begin{proof} We first prove the representation  (\ref{levelformu}).  Due to the $1$-homogeneity of $\alpha(p)$,   $p\cdot q=\alpha(p)$ for any $q\in  \partial \alpha(p)$.  The formula of $u(x,t)$ then follows directly from Theorem 3.1 in \cite{BE}. Clearly,  if $u(x,t)>-1$,  then
$$
u(x,t)=\max \{-t \alpha (p)+x\cdot p \,:\, |p|= 1\}-1.
$$
Now fix $x\in  \Rset^2$ such that $u(x,t)=0$.  Choose $|\bar p|=1$ such that
\begin{equation}\label{s-1}
u(x,t)=\ol{p}\cdot x-t\alpha(\ol{p})-1 = 0.
\end{equation}
By the Lagrange multiplier method, we get $x-tq=s \ol{p}$ for some $q\in  \partial \alpha(\ol{p})$ and some $s\in \Rset$.  
We use \eqref{s-1} to deduce further that $s=1$, and hence $x=\ol{p}+tq$.

Conversely,  if $x=\ol{p}+tq$ for some $\ol{p} \in S^1$ and $q\in \partial \alpha(\ol{p})$,  we want to show that $u(x,t)=0$. 
In fact,  in the representation formula of $u$, choosing $p=\bar p$ immediately leads to $u(x,t)\geq 0$.  
On the other hand,  for any $|p|=1$,  $q\in \partial \alpha(\ol{p})$ implies
$$
\alpha(p)\geq \alpha(\bar p)+q\cdot(p-\bar p).
$$
Therefore  
\[
-t\alpha(p)+x\cdot p\leq -t\alpha(\bar p)-tq\cdot(p-\bar p)+x\cdot p=p\cdot \bar p\leq 1.
\]
 So $u(x,t)\leq 0$. Hence we proved that $u(x,t)=0$.

\medskip

Next we show that $\Gamma_t$ is $C^1$.  Fix $t>0$.   Owing to the above arguments,  given $x\in \Gamma_t$, there  exists a unique unit  vector  $p_x$ such that $x=p_x+q_x$ for some $q_x\in  \partial \alpha(p_x)$ and 
$$
u(x,t)=-t\alpha (p_x)+p_x\cdot x-1.
$$
The uniqueness is due to the convexity of $\alpha$ which implies that  $(p-p')\cdot (q-q')\geq 0$ for $q\in  \partial \alpha(p)$ and $q'\in  \partial \alpha(p')$. Hence $x\to p_x$ is a continuous map from $\Gamma_t$  to the unit circle.  Combining with  $p_x\in  \partial_x u(x,t)$,   $p_x$ is the outward unit normal vector of $\Gamma_t$ at $x$ and $\Gamma_t$ is $C^1$. 

Next we prove the duality (\ref{duality}).  Again fix $t>0$.      This direction ``$\Leftarrow$" follows immediately from the representation formula (\ref{levelformu}).   So let us prove ``$\Rightarrow$".   We argue by contradiction.  Assume that $\alpha$ is $C^1$ away from the origin.   If $\Gamma_t$ is not strictly convex,  then there exist $x$, $y\in \Gamma_t$ such that $x\not= y$ and  $p_x=p_y$.  Hence $q_x\not= q_y$.  However,  $q_x=D\alpha(p_x)=D\alpha(p_y)=q_y$, which is a contradiction.  This proves that \eqref{duality} holds.  
\end{proof}

\begin{rmk}\label{geofront} As mentioned in Remark \ref{rem:FlatPiece}, when $n=2$,  a flat piece on the level set $\{\alpha(p) =1\}$ leads to a translated arc of the unit circle on $\Gamma_t$.  Moreover, singular points of $\alpha$ (i.e.,\/ points where $\partial \alpha(p)$ contains a line segment) generate flat pieces on $\Gamma_t$.   For example,  if $\alpha(p)=|p_1|+|p_2|$ for $p=(p_1,p_2)$, then the front $\Gamma_1$ at $t = 1$ is the closed curve shown in Fig. \ref{fig:flame}:

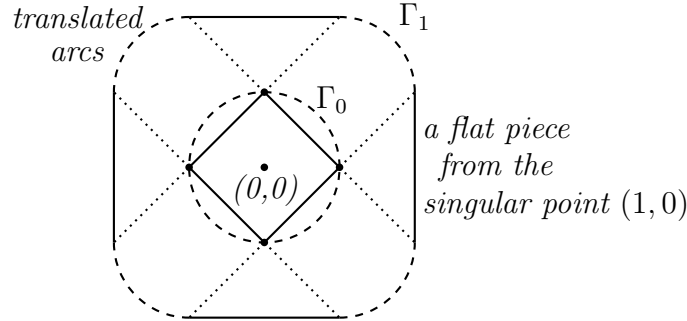
\begin{figure}[h]
\begin{center}
\begin{tikzpicture}
\draw[thick,dashed] (-1,2) arc (90:180:1);
\draw[thick,dashed] (-2,-1) arc (180:270:1);
\draw[thick,dashed] (1,-2) arc (270:360:1);
\draw[thick,dashed] (2,1) arc (0:90:1);
\draw[thick] (1,2) -- (-1,2);
\draw[thick] (1,-2) -- (-1,-2);
\draw[thick] (2,1) -- (2,-1);
\draw[thick] (-2,1) -- (-2,-1);
\draw[thick] (1,0) -- (0,1);
\draw[thick] (0,1) -- (-1,0);
\draw[thick] (-1,0) -- (0,-1);
\draw[thick] (0,-1) -- (1,0);
\fill (1,0) circle (0.05);
\fill (0,1) circle (0.05);
\fill (-1,0) circle (0.05);
\fill (0,-1) circle (0.05);
\fill (0,0) circle (0.05);
\draw[thick,dashed] (0,0) circle (1);
\draw[thick,dotted] (1,0) -- (2,1);
\draw[thick,dotted] (1,0) -- (2,-1);
\draw[thick,dotted] (0,1) -- (1,2);
\draw[thick,dotted] (0,1) -- (-1,2);
\draw[thick,dotted] (-1,0) -- (-2,1);
\draw[thick,dotted] (-1,0) -- (-2,-1);
\draw[thick,dotted] (0,-1) -- (-1,-2);
\draw[thick,dotted] (0,-1) -- (1,-2);
\draw(0,-0.3) node {(0,0)};
\draw (0.9,0.9) node {$\Gamma_0$};
\draw (2,2) node {$\Gamma_1$};
\draw (-2.5,2) node {translated};
\draw (-2.5,1.5) node {arcs};
\draw (3,0.5) node {\ a flat piece};
\draw (3,0) node {\ from the};
\draw (3.8,-0.5) node {\ singular point $(1,0)$};

\end{tikzpicture}
\caption{\label{fig:flame}Front propagation and the shape of  $\Gamma_1$.}
\end{center}
\end{figure}

\end{rmk}

\section{The Proof of Theorem \ref{main1}}

Fix $p\in \Rset^n$ to be an irrational vector satisfying a Diophantine condition, i.e., there exist $c=c(p)>0$ and $\gamma>0$ such that
\[
|p\cdot k| \geq \frac{c}{|k|^\gamma} \quad \text{for all} \ k \in \Zset^n \setminus \{0\}.
\]
For small $\ep$,  let  $\overline{H}_\ep(p)$ be the effective Hamiltonian associated with $|p|^2+\ep V\cdot p$, i.e.,
\begin{equation}\label{aa}
|p+Du^\ep|^2 + \ep V\cdot (p+Du^\ep) = \overline{H}_\ep(p).
\end{equation}
We now perform a formal asymptotic expansion  in term of $\ep$, which will be proved rigorously by using the viscosity solution techniques.
Suppose that
\begin{align*}
u^\ep &= \ep \phi_1 + \ep^2 \phi_2+ \cdots\\
\overline{H}_\ep(p) &= a_0(p) + \ep a_1(p) + \ep^2 a_2(p) + \cdots
\end{align*}
We then get that
\begin{align} \label{recursive}
a_0(p)&=|p|^2\\
a_1(p)&=2p\cdot D\phi_1 + V\cdot p \Rightarrow a_1(p)=0 \notag\\
a_2(p)&=2p\cdot D\phi_2 + |D\phi_1|^2 + V \cdot D\phi_1 \Rightarrow a_2(p)= \int_{\Tset^n} |D\phi_1|^2\,dx \notag\\
&\cdots \notag
\end{align}
Set
\[
V = \sum_{k \neq 0} v_k e^{i 2\pi k\cdot x}.
\]
We need $v_k \cdot k=0$ for all $k$ to have that $\text{div} V=0$. Then we get
\[
D\phi_1 =-\frac{1}{2} \sum_{k \neq 0} \frac{ (p\cdot v_k) e^{i 2\pi k\cdot x} k}{p\cdot k},
\]
and
\[
a_2(p)= \frac{1}{4} \sum_{k \neq 0} \frac{ |p\cdot v_k|^2 |k|^2}{|p\cdot k|^2}.
\]
Thus, formally, we can conclude that
\[
\overline{H}_\ep(p) \approx |p|^2 + \ep^2  \frac{1}{4} \sum_{k \neq 0} \frac{ |p\cdot v_k|^2 |k|^2}{|p\cdot k|^2} + O(\ep^3).
\]
We now prove this expansion formula rigorously.  See related computations in \cite{G2003}. 
\begin{lem}\label{lem:exp-r}
There exists $\tau > 0$, such that for all $|p|\in  \left[\tau,  {1\over \tau}\right]$, we have\begin{equation}\label{exp-r}
\overline{H}_\ep(p) = |p|^2 + \ep^2  \frac{1}{4} \sum_{k \neq 0} \frac{ |p\cdot v_k|^2 |k|^2}{|p\cdot k|^2} + O(\ep^3)
\end{equation}
as $\ep \to 0$. Here, the error term satisfies $|O(\ep^3)|\leq K\ep ^3$ for some $K$ depending only on $\tau$, $V$ and ${p\over |p|}$.  
\end{lem}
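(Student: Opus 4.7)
The plan is to construct a smooth two-term approximate corrector $\tilde u^\ep=\ep\phi_1+\ep^2\phi_2$ and then pin down $\overline H_\ep(p)$ by the usual characterization of the ergodic constant via classical sub- and supersolutions. First I would define $\phi_1$ as the zero-mean solution of $2p\cdot D\phi_1=-V\cdot p$ on $\Tset^n$. Using the Fourier expansion of $V$, this gives
\[
\phi_1(x)=-\frac{1}{4\pi i}\sum_{k\neq 0}\frac{p\cdot v_k}{p\cdot k}\,e^{i 2\pi k\cdot x}.
\]
Smoothness of $V$ forces $|v_k|$ to decay faster than any polynomial, and the Diophantine bound $|p\cdot k|\geq c(p)|k|^{-\gamma}$ compensates the small divisors, so $\phi_1\in C^\infty(\Tset^n)$ with every $C^j$ norm bounded in terms of $\tau$, $V$ and $p/|p|$. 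Then I would define $\phi_2$ as the zero-mean solution of
\[
2p\cdot D\phi_2=a_2(p)-|D\phi_1|^2-V\cdot D\phi_1.
\]
The right-hand side has zero mean (integration by parts against $\text{div}\,V=0$ gives $\int V\cdot D\phi_1=0$, so $a_2(p)=\int_{\Tset^n}|D\phi_1|^2\,dx$), and the same Fourier/Diophantine argument, now also using that $|D\phi_1|^2$ is smooth, produces a smooth $\phi_2$ with the same sort of estimates.

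Next I would substitute $\tilde u^\ep$ into the cell problem \eqref{aa} and collect by powers of $\ep$. A direct expansion
\[
|p+D\tilde u^\ep|^2=|p|^2+2\ep p\cdot D\phi_1+\ep^2\bigl(2p\cdot D\phi_2+|D\phi_1|^2\bigr)+2\ep^3 D\phi_1\cdot D\phi_2+\ep^4|D\phi_2|^2
\]
together with $\ep V\cdot(p+D\tilde u^\ep)=\ep V\cdot p+\ep^2 V\cdot D\phi_1+\ep^3 V\cdot D\phi_2$ and the defining equations for $\phi_1,\phi_2$ telescope the left-hand side of \eqref{aa} to $|p|^2+\ep^2 a_2(p)+R_\ep(x)$, where $\|R_\ep\|_{L^\infty(\Tset^n)}\leq K\ep^3$ and $K$ is controlled by $\|D\phi_1\|_\infty$, $\|D\phi_2\|_\infty$ and $\|V\|_\infty$, hence by $\tau$, $V$ and $p/|p|$.

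To conclude, I would set $C_\pm=|p|^2+\ep^2 a_2(p)\pm K\ep^3$. Since $\tilde u^\ep$ is a smooth classical solution of $H(p+D\tilde u^\ep,x)=C_\pm\mp(K\ep^3\mp R_\ep(x))$, it is simultaneously a classical subsolution of $H(p+Du,x)=C_+$ and a classical supersolution of $H(p+Du,x)=C_-$. Comparison with the true corrector (equivalently, the uniqueness of the ergodic constant for \eqref{Fcell}) then forces $C_-\leq \overline H_\ep(p)\leq C_+$, which is exactly \eqref{exp-r}.

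The principal obstacle is the small-divisor analysis that underlies smoothness of the correctors: without a Diophantine hypothesis the series for $\phi_1$ need not converge, and balancing the $|k|^\gamma$ loss coming from the divisor against the Schwartz-class decay of the Fourier coefficients $v_k$ is what both keeps $\phi_1,\phi_2$ in $C^\infty(\Tset^n)$ and puts the explicit dependence on $p/|p|$ (through $c(p)$) into the error constant $K$. A secondary check is the zero-mean solvability at order $\ep^2$, which requires the identity $a_2(p)=\int_{\Tset^n}|D\phi_1|^2\,dx$; this comes for free from the incompressibility of $V$, but it is essential so that $\phi_2$ can be chosen periodic.
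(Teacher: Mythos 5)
Your proposal is correct and follows essentially the same route as the paper: construct $\phi_1,\phi_2$ by solving the two transport equations via Fourier series (with the Diophantine condition controlling small divisors and $\operatorname{div}V=0$ guaranteeing the solvability condition $a_2(p)=\int_{\Tset^n}|D\phi_1|^2\,dx$), then compare the smooth approximate corrector $\ep\phi_1+\ep^2\phi_2$ with the true Lipschitz corrector at its max and min. The paper phrases the final step as ``looking at places where $v^\ep-w^\ep$ attains its maximum and minimum,'' which is exactly the sub-/supersolution characterization of the ergodic constant you invoke.
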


\begin{proof}  
As $p$ satisfies a Diophantine condition, we are able to solve the following two equations explicitly in $\Tset^n$ by computing Fourier coefficients
\[
\begin{cases}
p \cdot D\phi_1 = -\frac{1}{2} V\cdot p\\
p\cdot D\phi_2 = \frac{1}{2} \left( a_2(p) - |D\phi_1|^2 - V\cdot D\phi_1 \right).
\end{cases}
\]
Here $\phi_1, \phi_2:\Tset^n \to \Rset$ are unknown functions. 

Set $w^\ep=\ep \phi_1 + \ep^2 \phi_2$. Then, in light of the properties of $\phi_1, \phi_2$, $w^\ep$ satisfies
\[
|p+Dw^\ep|^2 + \ep V\cdot (p+Dw^\ep) = |p|^2 + \ep^2 a_2(p) + O(\ep^3).
\]
By looking at places where $v^\ep-w^\ep$ attains its maximum and minimum and using the definition of viscosity solutions, we derive that
\[
\overline{H}_\ep(p)= |p|^2 + \ep^2  a_2(p) + O(\ep^3).
\]
The error estimate can be read from the proof easily.
\end{proof}

It is obvious that $a_2(p)$ is not a constant function of $p$.   Hence Theorem \ref{main1} follows immediately from the following lemma. 

\begin{lem}\label{lem:MS-pert}
For any vector $p$ satisfying a Diophantine condition,
\begin{equation}\label{alpha-ep-lim}
\lim_{\ep \to 0} \frac{\alpha_\ep(p)-2|p|}{\ep^2 |p|}=a_2(p).
\end{equation}
\end{lem}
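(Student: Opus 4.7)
The plan is to plug the asymptotic expansion of Lemma \ref{lem:exp-r} directly into the variational formula \eqref{gamma} for $\alpha_\ep$.

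First I would observe a zero-homogeneity property of the coefficient $a_2$: since both $|p\cdot v_k|^2$ and $|p\cdot k|^2$ scale like $\lambda^2$, we have $a_2(\lambda p) = a_2(p)$ for every $\lambda>0$. Because $p$ satisfies a Diophantine condition, so does $\lambda p$ (with a rescaled constant), and both vectors share the direction $p/|p|$. Consequently, Lemma \ref{lem:exp-r} can be applied at $\lambda p$ for every $\lambda$ in a compact subinterval $[\lambda_-,\lambda_+]\subset (0,\infty)$, yielding the uniform expansion
\begin{equation*}
\overline{H}_\ep(\lambda p) \;=\; \lambda^2|p|^2 + \ep^2 a_2(p) + O(\ep^3),
\end{equation*}
where the error term is bounded by $K\ep^3$ with $K$ independent of $\lambda\in[\lambda_-,\lambda_+]$.

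Second, I would localize the minimizer in $\lambda$. By Lemma \ref{basic}(1) there is a unique $\lambda_p^\ep>0$ with $\alpha_\ep(p)=(1+\overline{H}_\ep(\lambda_p^\ep p))/\lambda_p^\ep$. Using $\overline{H}_\ep(q)\geq |q|^2$ gives
\begin{equation*}
\frac{1+\overline{H}_\ep(\lambda p)}{\lambda} \;\geq\; \frac{1}{\lambda}+\lambda|p|^2,
\end{equation*}
whose right-hand side is minimized at $\lambda=1/|p|$ with value $2|p|$, and blows up as $\lambda\to 0^+$ or $\lambda\to\infty$. Testing the infimum in \eqref{gamma} at $\lambda=1/|p|$ and using the expansion yields $\alpha_\ep(p)\leq 2|p|+O(\ep^2)$. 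Combining these two bounds shows that $\lambda_p^\ep$ lies in a fixed compact subinterval $[\lambda_-,\lambda_+]\subset (0,\infty)$ for all sufficiently small $\ep$; in fact $\lambda_p^\ep\to 1/|p|$.

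Third, on this interval I would substitute the uniform expansion into \eqref{gamma} and minimize the explicit leading part:
\begin{equation*}
\alpha_\ep(p) \;=\; \inf_{\lambda\in[\lambda_-,\lambda_+]} \left( \frac{1+\ep^2 a_2(p)}{\lambda}+\lambda|p|^2 \right) + O(\ep^3) \;=\; 2|p|\sqrt{1+\ep^2 a_2(p)} + O(\ep^3),
\end{equation*}
which Taylor-expands to $2|p|+\ep^2|p|\,a_2(p)+O(\ep^3)$. Dividing by $\ep^2|p|$ and letting $\ep\to 0$ gives \eqref{alpha-ep-lim}.

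The only delicate point is the second step: one must verify that the $O(\ep^3)$ in Lemma \ref{lem:exp-r} is genuinely uniform along the ray $\{\lambda p:\lambda\in[\lambda_-,\lambda_+]\}$, which reduces to inspecting that the error constant $K$ in Lemma \ref{lem:exp-r} depends only on $\tau$, $V$, and the direction $p/|p|$, and to ruling out escape of $\lambda_p^\ep$ to $0$ or $\infty$. Both are handled by the quadratic lower bound on $\overline{H}_\ep$ together with the trivial upper bound obtained by testing $\lambda=1/|p|$, as sketched above.
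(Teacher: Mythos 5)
Your proof is correct and follows essentially the same strategy as the paper: plug the expansion of $\overline{H}_\ep$ from Lemma \ref{lem:exp-r} into the variational formula \eqref{gamma}, localize the minimizer $\lambda_p^\ep$ near $1/|p|$, and compare. The paper normalizes $|p|=1$ and then gets the lower bound via $\lambda_\ep + 1/\lambda_\ep \geq 2$ and the upper bound by testing $\lambda=1$, whereas you minimize the leading-order expression $\frac{1+\ep^2 a_2(p)}{\lambda}+\lambda|p|^2$ exactly to get $2|p|\sqrt{1+\ep^2 a_2(p)}$; the two computations are equivalent, and your version makes the uniformity of the $O(\ep^3)$ error over the compact $\lambda$-interval explicit, which the paper leaves implicit.
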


\begin{proof}  
Since $\alpha_{\ep}(p)$ is homogeneous of degree 1 and $a_2(p)$ is homogeneous of degree 0 ($a_2(p)=a_2(\lambda p)$ for all $\lambda>0$), we may assume that $|p|=1$. 
Thanks to Lemma \ref{lem:exp-r}, we can write 
\be\label{asy}
\overline{H}_\ep(p)= |p|^2 + \ep^2  \frac{1}{4} \sum_{k \neq 0} \frac{ |p\cdot v_k|^2 |k|^2}{|p\cdot k|^2} + O(\ep^3) = 1+ \ep^2 a_2(p) + O(\ep^3).
\ee
Owing to Lemma \ref{basic},  there exists a unique constant $\lambda_{\ep}=\lambda_{\ep}(p)>0$ such that
$$
\alpha_{\ep}(p)={1+\overline{H}_\ep(\lambda_{\ep}p)\over \lambda_\ep}.
$$
It is easy to see that $\lambda_{\ep}\to  1$ as $\ep\to 0$.   So by  Lemma \ref{lem:exp-r} and $\lambda_{\ep}+{1\over \lambda_{\ep}}\geq 2$,
$$
\alpha_{\ep}(p)\geq 2+\ep ^2a_2(p)+O(\ep^3).
$$
Also,  by the definition,  it is obvious that 
\[
\alpha_{\ep}\leq 1+\overline H_{\ep}(p)=2+\ep^2 a_2(p) + O(\ep^3).
\]  
Hence the conclusion of the lemma holds. 
\end{proof}

\section{Proofs of Theorems \ref{main2} and \ref{main3}}

Before proceeding to the proofs,  we would like to point out something crucial in the arguments.  Clearly,  if the level curve $\{\alpha=1\}$ of $\alpha$ is strictly convex at $p$,   the level curve $\{\overline H=\lambda_{p}-1\}$ of $\ol{H}$ must be strictly convex at $\lambda_{p} p$, where $\lambda_p p$ is determined by Lemma \ref{basic}.  Nevertheless, our results do not follow  from any rigidity result for $\ol{H}$, namely that of \cite{Bialy}. Indeed, different $p$ in $\{\alpha = 1\}$ might correspond to different $\lambda_p$, which corresponds to different energy levels of $\ol{H}$, but the rigidity result from \cite{Bialy} can be applied only on the same energy level.  A key point of our proofs is to identify the exact location of at least one flat piece.

We first prove Claim (1)  of  Theorem \ref{main2}.  

\begin{proof}[{\bf Proof of Theorem \ref{main2} (1)}]
We carry out the proof in a few steps.

\medskip

  {\bf Step 1}:  Due to Claim (2) of Lemma \ref{basic}, the level curve $S_{\ep}$ is $C^1$.
  It is worth keeping in mind that $\alpha_\ep(p)$ is homogeneous of degree $1$.
 For each $p \in S_\ep$, denote $n_p$ the outward unit normal vector at $p$ to $S_\ep$.  

\medskip

{\bf Step 2:}  If $V$ is not  constantly zero,   by Lemma \ref{rigid} below,  there exists $x_0\in  \Rset^2$ and a unit rational vector $q_0$ and $T>0$ such that $Tq_0\in  \Zset^2$ and 
$$
\int_{0}^{T}q_0\cdot DV(x_0+q_0t)\,dt\ne 0.
$$
Here $q\cdot DV=D(q\cdot V)$.

\medskip

{\bf Step 3:}  For each $\ep>0$,  choose $p_{\ep}\in  S_{\ep}$ such that $n_{p_\ep}=q_0$.  To simplify notations,  we write $n_{\ep}=n_{p_\ep}$.  We claim that when $\ep$ is small enough, $p_{\ep}$ is a linear point of the set $\{\alpha_{\ep}=1\}$. Suppose this is false,  then there exists a decreasing sequence $\ep_m \downarrow 0$ and as equence $\{p_{\ep_m}\}$ such that $p_{\ep_m}$ is not a linear point of the set $\{\alpha_{\ep_m}=1\}$.  By (3) of Lemma \ref{basic},  $\tilde p_{\ep_m}=\lambda_{\ep_m}p_{\ep_m}$ is not a linear point of  the level curve $\{\overline H_{\ep_m}=\lambda_{\ep_m}-1\}$ either. Here  $\lambda_{\ep_m}>0$ is from Lemma \ref{basic}. Clearly,  the outward unit normal vector of the level curve $\{\overline H_{\ep_m}=\lambda_{\ep_m}-1\}$ at $\tilde p_{\ep_m}$ is also $q_0$.  Accordingly toTheorem \ref{foliation},  the projected Mather set $\mathcal {M}_{\tilde p_{\ep_m}}$ is the whole torus $\mathbb {T}^2$. Moreover, by (\ref{directionvector}),  there is a periodic minimizing orbit $\xi_{m}:\Rset\to  \Rset^2$  passing through $x_0$ from Step 2 such that $\xi_{m}(0)=x_0$,  $\xi_{m}(t_{m})=x_0+Tq_0$ for some $t_m>0$ and $\xi$ satisfies the Euler-Lagrange equation associated with the Lagrangian $L(q,x)={1\over 4}|q-V|^2$:
$$
{d(\dot \xi_m(t)-\ep_m V(\xi_m(t)))\over dt}=-(\dot \xi_m(t)-\ep_m V(\xi_m(t))\cdot \ep_m DV(\xi_m).
$$
Taking the integration on both sides over $[0,t_m]$,  and by periodicity, we get
$$   
\int_{0}^{t_m} (\dot \xi_m(t)-\ep_m V(\xi_m(t))\cdot DV(\xi_m)\,dt=0.
$$
Sending $m\to  +\infty$, we find
$$
\int_{0}^{T}q_0\cdot DV(x_0+q_0t)\,dt=0.
$$
This contradicts to Step 2. As a result, we identified a flat piece of $S_\ep$.
\end{proof}

\begin{lem}\label{rigid}  If 
$$
\int_{0}^{T}q\cdot DV(x+qt)\,dt=0.
$$
for any $x\in  \Rset^2$ and any rational unit vector $q\in  \Rset^2$ and $Tq\in  \Zset^2$,  then 
$$
V\equiv 0
$$
(Caution:   $q\cdot DV(x+qt)=D(q\cdot V(x+qt))\not=  {dV(x+qt)\over dt}$).  
\end{lem}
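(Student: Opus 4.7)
The plan is to proceed by Fourier analysis, which turns the integral condition into an algebraic constraint on the Fourier coefficients of $V$. Since $V$ is smooth, $\Zset^2$-periodic, mean-zero and incompressible, write
\[
V(x) = \sum_{k \in \Zset^2 \setminus \{0\}} v_k\, e^{2\pi i k \cdot x}, \qquad k \cdot v_k = 0 \text{ for every } k.
\]
Using that $q \cdot DV = \nabla(q \cdot V)$, the integrand in the hypothesis becomes
\[
\nabla(q \cdot V)(x + qt) = \sum_{k \neq 0} 2\pi i\, (q \cdot v_k)\, k\, e^{2\pi i k \cdot x}\, e^{2\pi i t(k \cdot q)}.
\]

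Integrating term-by-term over $t \in [0,T]$ and exploiting the crucial hypothesis $Tq \in \Zset^2$, I observe that $T(k \cdot q) = k \cdot (Tq) \in \Zset$, hence $e^{2\pi i T(k \cdot q)} = 1$. Therefore $\int_0^T e^{2\pi i t(k \cdot q)}\,dt$ equals $T$ when $k \cdot q = 0$ and vanishes otherwise. The hypothesis $\int_0^T q \cdot DV(x + qt)\,dt = 0$ then reduces to
\[
\sum_{\substack{k \neq 0 \\ k \cdot q = 0}} (q \cdot v_k)\, k\, e^{2\pi i k \cdot x} = 0 \qquad \text{for every } x \in \Rset^2,
\]
and uniqueness of Fourier coefficients forces $q \cdot v_k = 0$ for every $k \neq 0$ perpendicular to $q$ (the vector $k$ being nonzero).

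To finish, I would show that every $k \in \Zset^2 \setminus \{0\}$ admits an admissible rational unit direction perpendicular to it. Write $k = m(a,b)$ with $m \in \Nset$ and $\gcd(a,b) = 1$, and set $q := (-b,a)/\sqrt{a^2+b^2}$, $T := \sqrt{a^2+b^2}$. Then $q$ is a rational unit vector, $Tq = (-b, a) \in \Zset^2$, and $k \cdot q = 0$, so the pair $(q, T)$ falls under the hypothesis. The previous step gives $q \cdot v_k = 0$; combined with incompressibility $k \cdot v_k = 0$ and the fact that $\{k, q\}$ spans $\Rset^2$, this forces $v_k = 0$. Since $k$ was arbitrary and $v_0 = 0$ by the mean-zero condition, $V \equiv 0$. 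The argument is elementary once the Fourier expansion is in place; the only point requiring attention is the rigidity aspect in dimension two, namely that for each integer frequency $k$ there is a single rational direction perpendicular to $k$, which together with the incompressibility direction already spans the plane.
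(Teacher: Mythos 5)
Your proof is correct and follows essentially the same Fourier-analytic route as the paper: expand $V$ in Fourier series, exploit $Tq\in\Zset^2$ to see that only modes with $k\cdot q=0$ survive the time-average, then for each frequency $k$ pick an admissible $q\perp k$ so that $q\cdot v_k=0$, which together with incompressibility $k\cdot v_k=0$ forces $v_k=0$. Your write-up is in fact slightly more careful than the paper's (explicit normalization of $q$ to a unit vector, and honest bookkeeping of all surviving frequencies perpendicular to $q$ rather than just $\pm k$), but the argument is the same.
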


\begin{proof}
Assume that $V(y)=\sum_{k\in  \Zset^2} v_k e^{i 2\pi k\cdot y}$, where $\{v_k\} \subset  \Rset^2$ are Fourier coefficients of $V$.   Since $\text{div}(V)=0$ and $\int_{\Bbb T^2}V\,dx=0$,  we have that $v_0=0$ and 
\be\label{van}
k\cdot v_k=0
\ee
 for all $k$.    Then for any $q\in  \Rset^2$,  
 \[
   q\cdot DV(y)=D(q\cdot DV)=2\pi i\sum_{k\in  \Zset^2\setminus\{0\}} (q\cdot v_k) e^{i2\pi k\cdot y} k. 
 \]  
Now fix an integer vector $k=(l_1,l_2) \in \Zset^2 \setminus \{0\}$. Choose $q=(-l_2,l_1)$ and $T=1$.   Then
$$
\int_{0}^{1}q\cdot DV(x+qt)\,dt=0\quad \Rightarrow \quad q\cdot v_{k}  e^{i 2\pi k\cdot x}+ q\cdot v_{-k}  e^{-i 2\pi k\cdot x}=0
$$
for any $x\in  \Rset^2$.   So $q\cdot v_{k}= q\cdot v_{-k} =0$,  which implies that  $v_k=\beta_k k $ for some $\beta_k\in  \Rset$.  Accordingly,  (\ref{van}) leads to $ \beta_k=0$ for all $k\in \Zset^2$.  So $V\equiv 0$. 
\end{proof}

Next we prove Claim (2) of Theorem \ref{main2}. 

\begin{proof}[{\bf Proof of Theorem \ref{main2}(2)}]

Recall that, for $A\geq 0$ and $p\in \Rset^2$,  $\alpha_A(p)$ is defined as
 $$
\alpha_{A}(p)=\inf_{\lambda>0}{\overline H_{A}(\lambda p)+1\over \lambda}.
$$
Here  $\overline H_{A}$ is the effective Hamiltonian associated with  $H_A(p,x)=|p|^2+A V\cdot p$.   The corresponding Lagrangian is 
$$
L_A(q,x)={1\over 4}|q-AV(x)|^2.
$$
For $p\in  \Rset^2\backslash\{0\}$ and $A\geq 0$,  by Lemma \ref{basic},  denote $\lambda_{p,A}>0$ as the unique positive number such that
$$
\alpha_A(p)={\overline H_A(\lambda_{p,A}p)+1\over \lambda_{p,A}}.
$$

 Let $K$ be a stream function such that $V=(-K_{x_2}, K_{x_1})$.  Clearly,  we have that $DK\cdot V\equiv 0$.  We consider the dynamical system $\dot{\xi} = V(\xi)$.  By Poincar\'e recurrence theorem, we have the following  two cases.

\medskip

 {\bf Case 1:} $\dot \xi=V(\xi)$ has a non-critical closed periodic orbit on $\Rset^2$.   
By  the stability in 2d, there exists a strip of closed periodic orbits in its neighborhood.   Without loss of generality,  we may label them as  $\gamma_{s}(t)$ for $s\in  [0, \delta]$ for some $\delta>0$ sufficiently small such that $K(\gamma_s(t))\equiv s$ and $\gamma_s(0)=\gamma_s(T_s)$ for some $T_s>0$ (minimum period).  See the following figure.  Denote  $\Gamma=\cup_{s\in  [0,  \delta]}\gamma_s$ and 
$$
\tau=\max_{x\in  \Gamma}|DK(x)|>0.
$$


\begin{figure}[h]
\begin{center}
\begin{tikzpicture}
\draw  plot [smooth cycle] coordinates {(0,0) (-0.5,1) (-2,2) (0,3) (1.5,2) (1,1)};
\draw (0,2.8) node {$\gamma_0$};
\draw[->] (0,3)--(0.05,3);

\draw  plot [smooth cycle] coordinates {(0,-1) (-0.5,0.2) (-2,1) (-3,2) (-2,4) (0,4.5) (2,3) (3,2) (1.3,0)};
\draw (0,4.3) node {$\gamma_s$};
\draw[->] (-0.05,4.5)--(0,4.5);

\draw  plot [smooth cycle] coordinates {(0,-2) (-0.5,-1) (-2,0) (-3,1) (-4,2) (-3,3.5) (-2,5) (0,5.5) (2,4) (3,3) (4, 2.5) (3,1) (2,0)};
\draw (0,5.3) node {$\gamma_\delta$};
\draw[->] (-0.05,5.5)--(0,5.5);

\draw (2,3) node {$\Gamma$};

\draw plot [smooth] coordinates {(-3,-1) (-2,0) (-0.5,1) (1,1) (2,0) (3,-1)};
\draw (3,-0.8) node {$\xi$};

\fill (-2,0) circle (0.05);
\draw (-2,-0.3) node {$t_1$};

\fill (-0.5,1) circle (0.05);
\draw (-0.5,1.3)node {$t_2$};

\fill (1,1) circle (0.05);
\draw (1,1.3) node {$t_3$};

\fill (2,0) circle (0.05);
\draw (2,-0.3) node {$t_4$};
\end{tikzpicture}
\caption{Closed periodic orbits in $\Gamma$}
\end{center}
\end{figure}
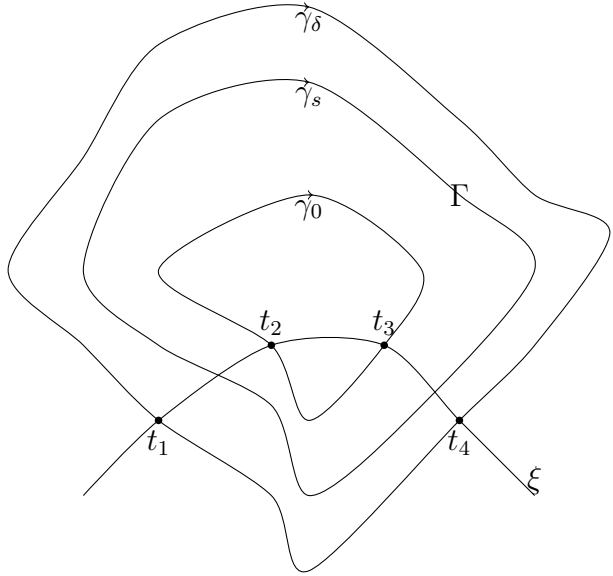

\noindent {\bf Claim 1:} For $p\in  \Rset^2$,    if  $\overline H_A(p)<\bar c A^2$ for $\bar c=  {4\delta^2\over T_{\delta}^{2} \tau^2 }$,  then any unbounded absolutely minimizing trajectory associated with $L_A+\overline H_A(p)$ cannot  intersect   $\gamma_0$.   

We argue by contradiction.  If not,   let $\xi:\Rset\to  \Rset^2$ be an unbounded absolutely minimizing trajectory with $\xi\cap \gamma_0\not=\emptyset$.  Then there must exist $t_1< t_2\leq t_3<t_4$ such that 
\[
\xi(t_1), \xi(t_4)\in   \gamma_{\delta},  \quad \xi(t_2), \xi(t_3)\in  \gamma_{0} \quad \text{and} \quad \xi([t_1,t_2])\cup  \xi([t_3,t_4])\subset \Gamma.
\]
See Figure 2 for demonstration.  
Set
\[
E_1=\int_{t_1}^{t_2}{1\over 4} |\dot \xi-AV(\xi)|^2+\overline H_A(p)\,ds \quad \text{and} 
\quad E_2=\int_{t_3}^{t_4}{1\over 4} |\dot \xi-AV(\xi)|^2+\overline H_A(p)\,ds.
\]
Since 
$$
|\dot \xi-AV(\xi)|\geq  {1\over \tau}|\dot \xi-AV(\xi)|\cdot |DK(\xi)|\geq {1\over \tau}|(\dot \xi-AV(\xi))\cdot DK(\xi)|={1\over \tau}|\dot w(t)|
$$
for $w(t)=K(\xi (t))$ and $t\in  [t_1,t_2]\cup [t_3,t_4]$,  we have that 
\begin{align*}
E_1+E_2&\geq \overline H_A(p)(t_2-t_1+t_4-t_3)+{1\over 4\tau^2}\left(\int_{t_1}^{t_2}|\dot w(t)|^2\,dt+\int_{t_3}^{t_4}|\dot w(t)|^2\,dt\right)\\
&\geq \overline H_A(p)(t_2-t_1+t_4-t_3)+{1\over 4\tau^2}\left({\delta^2 \over t_4-t_3}+{\delta^2 \over t_2-t_1}\right)\\
&\geq {2{\delta}\over \tau}\sqrt{\overline H_A(p)}.
\end{align*}
   However,  if we travel from  $\xi(t_1)$ to $\xi(t_4)$ along the route $\gamma(s)=\gamma_{\delta}(sA)$, the cost is  at most ${T_{\delta}\over A}\overline H_A(p)<{2{\delta}\over \tau}\sqrt{\overline H_A(p)}$.  This contradicts to the assumption that $\xi$ is a minimizing trajectory.  Hence our  above claim holds. 

Now choose $\ep_0>0$ such that $\ep_{0}^{2}+\overline M\ep_0<\bar c$ for $\overline M=\max_{\Bbb T^2}|V|$.   Owing to Lemma \ref{bend},  there exists $A_0$ such that if $A\geq A_0$, then
$$
\lambda_{p,A}\leq {\ep_0\over 2} A
$$
for any unit vector $p$.  

\medskip

\nit {\bf Claim 2:}   Assume  that $p_A\in S_A=\{\alpha_A=1\}$ has a rational outward normal vector. Then $p_A$ is a linear point of $S_A$  if $A\geq A_0$. 

It is equivalent to proving that  $\bar p={p_A\over |p_A|}$ is a linear point of  the level curve $\{\alpha_A(p)={1\over |p_A|}\}$.  We argue by contradiction.  If not,  by Theorem \ref{foliation} and (3) of Lemma \ref{basic},  $\overline H_A$ is strictly convex at $\lambda_{\bar p, A}\bar p$ and the associated projected Mather set $\mathcal{M}_{\lambda_{\bar p, A}\bar p}$ is the whole Torus.    Due to Claim 1,  we must have that 
$$
\overline H_A(\lambda_{\bar p, A}\bar p)\geq \bar c A^2.
$$
Since $\overline H_A(p)\leq |p|^2+A\overline M |p|$,  we have that $\lambda_{A,\bar p}>\ep_0 A$.  This contradicts to the choice of $A$.  
Therefore, the above claim holds and the result of this theorem follows.

\medskip

{\bf Case 2:}  Next we consider the case when $\dot\xi = V(\xi)$ has an unbounded  periodic orbit  $\eta:\Rset\to \Rset^2$, i.e., $\eta (T_0)-\eta(0)\in  \Zset^2\backslash\{(0,0)\}$ for some $T_0>0$.  Denote   $q_0$ as a rotation vector of $\eta$.  Clearly,  $q_0$ is a rational vector.   Since $\int_{\Bbb T^2}V\,dx=0$, there also exists an unbounded periodic orbit $\dot {\tilde \eta} (t)=V(\tilde \eta(t))$ with a rotation vector $-cq_0$ for some $c>0$.  See the following Figure 3.


\begin{figure}[h]
\begin{center}
\begin{tikzpicture}

\draw plot [smooth] coordinates {(-2,0) (-1,1.5) (0,1) (1,0.8) (2,2) (3,1.8) (4,3)};
\draw (0,0.8) node {$\eta$};
\draw[->] (-1,1.5)--(-0.95,1.51);
\draw[->] (1,0.8)--(1.05,0.82);

\draw plot [smooth] coordinates {(-2,-2) (-1,-0.3) (0,-1) (1,-1) (2,0) (3,-0.2) (4,1)};
\draw (0,-1.2) node {$\tilde \eta$};
\draw[<-] (-1,-0.3)--(-0.95,-0.29);
\draw[<-] (1,-1)--(1.05,-0.97);

\draw[dashed] plot [smooth] coordinates {(-2,0) (-1,2.5) (2,3) (3,3.5) (4,3)};
\draw (-1,2.8) node {$\xi_m$};
\draw (-2,-0.2) node {$x_m$};
\draw (4.8,2.8) node {$x_m+aq_0$};
\end{tikzpicture}
\caption{Unbounded periodic orbits $\eta$ and $\tilde \eta$}
\end{center}
\end{figure}
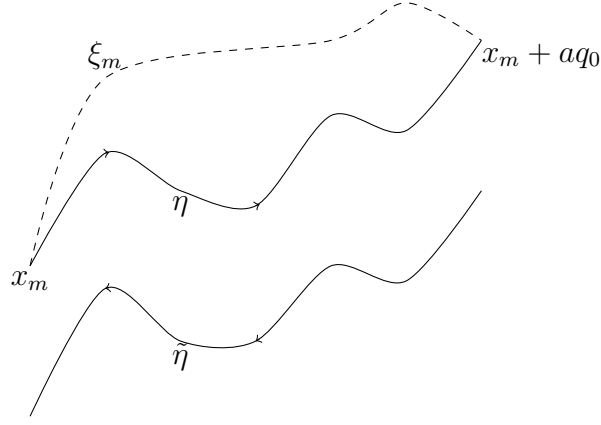

\medskip

\nit {\bf Claim 3}:  Choose $p_A\in S_A$  such that the unit outward normal vector  at $p_A$ is   $q_0\over |q_0|$. Then when $A$ is large enough,  $p_A$ is a linear point of $S_A$.  This is consistent with the last statement in Remark \ref{rem:FlatPiece}:  when $A$ is very large,  we expect the shear structure to dominate the flame propagation. 

It is equivalent to proving that  $\bar p={p_A\over |p_A|}$ is a linear point of the level curve $\{\alpha_A(p)={1\over |p_A|}\}$ for sufficiently large $A$.   We argue by contradiction.   If not,  then by (3) of Lemma \ref{basic},  there exist a sequence $A_m\to +\infty$ as $m\to +\infty$ and $|p_m|=1$ such that  $\overline H_{A_m}(\lambda_mp_m)$ is  strictly convex near $\lambda_mp_m$. Here $\lambda_m>0$ is the unique number  satisfying (Lemma \ref{basic})
$$
\alpha_{A_m}(p_m)={1+\overline H_{A_m}(\lambda_mp_m)\over \lambda_m}.
$$
Therefore, by Theorem \ref{foliation}, the associated projected Mather set $\mathcal {M}_{\lambda_mp_m}$  is the whole torus. So there exists a unique periodic $C^1$ solution  $v_m$ (up to additive constants) to 
$$
|\lambda_mp_m+Dv_m|^2+A_mV\cdot (\lambda_mp_m+Dv_m)=\overline H_{A_m}(\lambda_mp_m) \quad \text{in  $\Rset^2$}.
$$
Let $T_0$ and $\tilde T_0$ be the minimal period of $\eta$ and $\tilde \eta$ respectively. Then $q_0={\eta(T_0)-\eta(0)\over T_0}$ and $-cq_0={\tilde \eta(T_0)-\tilde \eta(0)\over T_0}$. Taking integration along $\eta$ and $\tilde \eta$,  we obtain that
$$
{1\over T_0}\int_{0}^{T_0}|\lambda_mp_m+Dv_m(\eta(s))|^2\,ds+A_mq_0\cdot \lambda_mp_m=\overline H_{A_m}(\lambda_mp_m)
$$
and
$$
{1\over \tilde T_0}\int_{0}^{\tilde T_0}|\lambda_mp_m+Dv_m(\tilde \eta(s))|^2\,ds-cA_m q_0\cdot \lambda_mp_m=\overline H_{A_m}(\lambda_mp_m).
$$
  Accordingly, without loss of generality,  we may assume that for all $m\geq 1$,
$$
\max_{s\in \Rset}|\lambda_mp_m+Dv_m(\eta(s))|\geq  \sqrt {\overline H_{A_m}(\lambda_mp_m)}.
$$
So there exists $x_m\in  \eta(\Rset)\cap [0,1]^n$ such that 
\be\label{lower}
|\lambda_mp_m+Dv_m(x_m)|\geq  \sqrt {\overline H_{A_m}(\lambda_mp_m)}. 
\ee
Since the projected Mather set $\mathcal {M}_{\lambda_mp_m}$  is the whole torus and the unit outward normal vector of $\{\overline H_{A_m}=\overline H_{A_m}(\lambda_mp_m)\}$ at $\lambda_mp_m$ is also  $q_0\over |q_0|$,  by (\ref{directionvector}),  we may find  a periodic minimizing trajectory  $\xi_m:\Rset\to  \Rset^2$ such that $\xi_m(0)=x_m$ and $\xi_m(t_m)=x_m+a_0q_0$ (see Figure 3).   Here  $t_m>0$ is the minimal period of $\xi_m$ and $a_0>0$ is the smallest positive number such that $a_0q_0\in  \Zset^2$.  Note that $\eta(T_0)-\eta(0)=a_0q_0$ as well.  Moreover, by (\ref{graphsupport}), 
\be\label{chara}
\dot \xi_m=2(\lambda_mp_m+Dv_m(\xi_m))+A_mV(\xi_m)
\ee
and
$$
{d(\dot \xi_m(s)-A_mV(\xi_m(s)))\over ds}=-(\dot \xi_m-A_mV(\xi_m))A_mDV(\xi_m).
$$
Since $\xi_m$ is an absolutely minimizing trajectory,   we must have that
\be{}\label{error}
{T_0\over A_m}\overline H_{A_m}(\lambda_mp_m)\geq \int_{0}^{t_m}{1\over 4}|\dot \xi_m(s)-A_m V(\xi_m)|^2\,ds+t_m\overline H_{A_m}(\lambda_mp_m).
\ee
The left hand side of the above is the cost of traveling along the route $\gamma(s)=\eta(sA_m)$  from $x_m$ to $x_m+aq_0$.  So $t_m\leq {T_0\over A_m}$.  
Consider
$$
w_m(s)=\xi_m\left({s\over A_m}\right).
$$
Then $w_m$ is a periodic curve with a minimal period $A_mt_m\leq T_0$, 
\be\label{equal}
{1\over 4} |\dot w_m-V(w_m)|^2+{1\over 2}V(w_m)\cdot (\dot w_m-V(w_m))={\overline H_{A_m}(\lambda_mp_m)\over A_{m}^{2}}
\ee
and
$$
{d(\dot w_m(s)-V(w_m(s)))\over ds}=-(\dot w_m(s)-V(w_m(s))\cdot DV(w_m(s)).
$$
Hence there exists a constant $\theta_0>0$ depending only on $V$ such that
\be\label{control}
{\min_{s\in \Rset}|\dot w_m(s)-V(w_m(s))|\over \max_{s\in \Rset}|\dot w_m(s)-V(w_m(s))|}>\theta_0.
\ee
Owing to (\ref{error}),  we obtain that
\be\label{integral}
\int_{0}^{A_mt_m}|\dot w_m(s)-V(w_m(s))|^2\,ds\leq {4T_0\overline H_{A_m}(\lambda_mp_m)\over A_{m}^{2}}.
\ee
Owing to  (\ref{equal}) and  Lemma \ref{bend},   $\max_{s\in \Rset}|\dot w_m(s)|$ is uniformly bounded.   Since $w_m(A_mt_m)-w_m(0)=a_0q_0$,  it is clear that 
\[
\liminf_{m\to +\infty}A_mt_m>0.
\]
Now combining (\ref{integral}), (\ref{control}) and  Lemma \ref{bend},  it is not hard to show that  
\[
\lim_{m\to +\infty}A_mt_m=T_0
\] 
and
\be\label{converge}
\lim_{m\to  +\infty}w_m(s)=\eta(s)   \quad \text{uniformly in $C^1(\Rset^1)$}.
\ee
Write $c_m=\max_{s\in \Rset}|\dot w_m(s)-V(w_m(s))|$.   Note that
$$
\dot w_m(s)={2(\lambda_mp_m+Dv_m(w_m(s)))\over A_m}+V(w_m(s)).
$$
and by (\ref{equal}), 
$$
V(w_m)\cdot {\dot w_m-V(w_m)\over c_m}={2\overline H_{A_m}(\lambda_m p_m)\over A_{m}^{2}c_m}-{1\over 2c_m}|\dot w_m-V(w_m)|^2.
$$
Due to (\ref{lower}) and (\ref{chara}),  $c_mA_m\geq 2\sqrt {\overline H_{A_m}(\lambda_mp_m)}$. Combining with  (\ref{converge}) and  Lemma \ref{bend},  we have that
\be\label{one-direction}
\lim_{m\to +\infty} V(w_m(s))\cdot {\dot w_m(s)-V(w_m(s))\over c_m}=0 \quad \text{uniformly in $\Rset^1$}.
\ee
Note that 
$$
{dK(w_m(s))\over ds}=DK(w_m(s))\cdot (\dot w_m(s)-V(w_m(s)). 
$$
Taking integration from 0 to $t_mA_m$, due to periodicity,  we have that
$$
\int_{0}^{t_mA_m}DK(w_m(s))\cdot \left({\dot w_m(s)-V(w_m(s))\over c_m}\right)\,ds=0.
$$
By   (\ref{control}),  ${|\dot w_m(s)-V(w_m(s))|\over c_m}\in [\theta_0,  1]$.   Combining with (\ref{one-direction}),  by sending $m\to +\infty$,  we obtain that
$$
\int_{0}^{T_0}a(s)|DK(\eta(s))|\,ds=0
$$
for some $a(t)>0$.   This is a contradiction.  So our claim holds.

Combining Case 1 and Case 2,  we obtain the desired result.

\end{proof}

\begin{lem}  Let $\overline H$ be the effective Hamiltonian of 
$$
|p+Dv|^2+V(x)\cdot (p+Dv)=\overline H(p).
$$
Then for $|p|\geq  \theta>0$,  there exists $\mu_\theta>0$ depending only on $\theta$ and $V$ such that
$$
\min_{q\in \partial \overline H(p)} q\cdot p \geq \overline H(p)+\mu_\theta.
$$
\end{lem}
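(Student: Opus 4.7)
The plan is to recast the inequality through Legendre--Fenchel duality and then rule out the equality case by showing that $\overline L(q)=0$ with $|p|\geq\theta$ would collapse the cell problem and force $\overline H(p)=0$, contradicting $\overline H(p)\geq|p|^2$.

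Let $\overline L(q)=\sup_{p\in\Rset^n}(q\cdot p-\overline H(p))$ denote the effective Lagrangian, i.e., the Legendre dual of $\overline H$. Convex duality gives the characterization $q\in\partial\overline H(p)\iff q\cdot p=\overline H(p)+\overline L(q)$, so the desired inequality is equivalent to
\[
\overline L(q)\geq\mu_\theta\qquad\text{whenever}\ q\in\partial\overline H(p),\ |p|\geq\theta.
\]
Note that $\overline L\geq 0$ globally, since $\overline L(q)\geq q\cdot 0-\overline H(0)=0$.

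The heart of the argument is the strict inequality $\overline L(q)>0$ at such pairs. Assume for contradiction that $\overline L(q)=0$ for some $q\in\partial\overline H(p)$ with $|p|\geq\theta$. Since the rotation vectors of Mather measures at $p$ fill out all of $\partial\overline H(p)$, pick $\mu\in\mathcal W_p$ having rotation vector $q$. Then
\[
0=\overline L(q)=\int L(\dot\xi,x)\,d\mu=\int\tfrac14|\dot\xi-V(x)|^2\,d\mu,
\]
so $\dot\xi=V(x)$ on $\mathrm{supp}(\mu)\subset\widetilde{\mathcal M}_p$. By the graph property \eqref{graphsupport}, any viscosity solution $u$ of the cell problem \eqref{Fcell} satisfies
\[
p+Du(x)=D_qL(\dot\xi,x)=\tfrac12(\dot\xi-V(x))=0\qquad\text{on}\ \mathcal M_p.
\]
Plugging this into \eqref{Fcell} gives $\overline H(p)=0$, contradicting $\overline H(p)\geq|p|^2\geq\theta^2>0$.

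To upgrade this strict positivity to a uniform bound $\mu_\theta>0$, I would argue by compactness. Suppose sequences $|p_k|\geq\theta$ and $q_k\in\partial\overline H(p_k)$ satisfy $\overline L(q_k)\to 0$. Using $\overline L(q)\geq q\cdot(q/2)-\overline H(q/2)\geq|q|^2/4-M|q|/2$ with $M=\max|V|$ (here one also uses the elementary bound $\overline H(p)\leq|p|^2+M|p|$ coming from testing the cell problem with $u=0$), $|q_k|$ stays bounded. The subgradient inequality tested at $p'=0$ gives $q_k\cdot p_k\geq\overline H(p_k)\geq|p_k|^2$, so $|p_k|\leq|q_k|$ is bounded too. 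Extracting a convergent subsequence $(p_k,q_k)\to(p^*,q^*)$ with $|p^*|\geq\theta$, closedness of the subgradient graph of the convex function $\overline H$ and continuity of $\overline L$ force $q^*\in\partial\overline H(p^*)$ and $\overline L(q^*)=0$, contradicting the strict positivity proved above.

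The main obstacle is the rigidity encoded in the strict-positivity step: decoding that $\overline L(q)=0$ pins the Mather measure onto the graph of the vector field $V$, and then pushing this through the graph property \eqref{graphsupport} to collapse the cell problem on $\mathcal M_p$ into $\overline H(p)=0$. Once this is in hand, upgrading to a quantitative $\mu_\theta$ is a routine compactness argument in convex analysis.
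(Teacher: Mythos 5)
Your proof is correct, and the strict-positivity step takes a genuinely different route from the paper. The paper's proof (a single sentence) invokes Theorem~\ref{EGone}: if some $q\in\partial\overline H(p)$ had $q\cdot p=\overline H(p)$, then $\overline H$ would be linear on the segment $[0,p]$, hence by Theorem~\ref{EGone} constant there, contradicting $\overline H(0)=0<|p|^2\leq\overline H(p)$; the quantitative bound then comes from compactness. You instead work directly at the dynamical level: $\overline L(q)=0$ pins a Mather measure onto the graph of $V$ because the Ma\~n\'e Lagrangian $L=\frac{1}{4}|v-V|^2$ is nonnegative, the graph property \eqref{graphsupport} then forces $p+Du\equiv 0$ on the projected support of that measure, and the cell problem collapses to $\overline H(p)=0$. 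In effect you reprove the needed special case of Theorem~\ref{EGone} by exploiting the specific structure of the Ma\~n\'e Hamiltonian rather than by citing the general Evans--Gomes result for Tonelli systems; this is more self-contained, and you also make the compactness step fully explicit (bounding $|q_k|$ from the coercivity of $\overline L$, then $|p_k|\leq |q_k|$ from the subgradient inequality) where the paper merely asserts it. One ingredient worth flagging: you use that every $q\in\partial\overline H(p)$ is realized as the rotation vector of some Mather measure at $p$ (equivalently, that the Mather $\beta$-function is attained and equals $\overline H^*$); this is standard, but it is not explicitly stated in the paper's preliminaries, so a short remark or citation there would tighten the argument.
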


\begin{proof}  
This follows easily from a compactness argument,  $\overline H(0)=0$, $\overline H(p)\geq |p|^2$ and the strict convexity of $\overline H$ along the radial direction (Theorem \ref{EGone}).
\end{proof}

Due to the simple equality  $\overline H(p)={\overline H_A(Ap)\over A^2}$,  we immediately derive the following corollary.  

\begin{cor}\label{strict}If $|p|\geq  \theta A$,  then
$$
\min_{q\in \partial \overline H_A(p)} q\cdot p\geq \overline H_A(p)+\mu_\theta A^2.
$$
 \end{cor}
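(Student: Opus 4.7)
The plan is to reduce the corollary to the preceding lemma by means of the scaling identity $\overline{H}_A(p) = A^2 \overline{H}(p/A)$ recorded in the excerpt immediately before the statement. First I would make this identity precise. If $u$ is a periodic viscosity solution of
$$
|p + Du|^2 + A V \cdot (p + Du) = \overline{H}_A(p) \quad \text{in } \Tset^n,
$$
then dividing by $A^2$ and setting $v = u/A$ and $q = p/A$ gives
$$
|q + Dv|^2 + V \cdot (q + Dv) = \overline{H}_A(Aq)/A^2 \quad \text{in } \Tset^n.
$$
By uniqueness of the ergodic constant, the right-hand side equals $\overline{H}(q)$, so $\overline{H}_A(p) = A^2 \overline{H}(p/A)$.

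Next I would translate this rescaling to the subdifferentials. A direct computation from the definition shows that whenever $F(p) = A^2 G(p/A)$ with $G$ convex, one has $Q \in \partial F(p)$ if and only if $Q/A \in \partial G(p/A)$. Applying this to $F = \overline{H}_A$ and $G = \overline{H}$ yields $\partial \overline{H}_A(p) = A\, \partial \overline{H}(p/A)$. Consequently, every $Q \in \partial \overline{H}_A(p)$ can be written as $Q = A q_0$ with $q_0 \in \partial \overline{H}(p/A)$, and one has the numerical identity $Q \cdot p = A^2\, q_0 \cdot (p/A)$.

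Finally I would invoke the preceding lemma. Since $|p| \geq \theta A$, the rescaled vector $p/A$ satisfies $|p/A| \geq \theta$, so the lemma yields
$$
q_0 \cdot (p/A) \geq \overline{H}(p/A) + \mu_\theta
$$
for every $q_0 \in \partial \overline{H}(p/A)$. Multiplying through by $A^2$ and using both $A^2 \overline{H}(p/A) = \overline{H}_A(p)$ and $A^2 q_0 \cdot (p/A) = Q \cdot p$, I obtain $Q \cdot p \geq \overline{H}_A(p) + \mu_\theta A^2$. Taking the minimum over $Q \in \partial \overline{H}_A(p)$ completes the proof.

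The entire argument is essentially mechanical once the scaling identity is in hand. The only conceptual point worth spelling out is the identity $\partial \overline{H}_A(p) = A\, \partial \overline{H}(p/A)$, but this is immediate from the definition of the subdifferential of a convex function, so I do not expect any genuine obstacle.
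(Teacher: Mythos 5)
Your proof is correct and follows exactly the route the paper indicates: the paper states the corollary follows ``immediately'' from the scaling identity $\overline H_A(Ap)=A^2\overline H(p)$, and your argument simply spells out that implication, including the standard fact that $\partial\overline H_A(p)=A\,\partial\overline H(p/A)$.
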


\begin{lem}\label{bend} For $|p|=1$ and $A\geq 1$,  denote $\lambda_{p,A}$ such that
$$
\alpha_A(p)={\overline H_A(\lambda_{p,A}p)+1\over \lambda_{p,A}}.
$$
Then
$$
\lim_{A\to +\infty} {\max_{ |p|=1}\lambda_{p,A}\over A}=\lim_{A\to +\infty}{\max_{|p|=1}\overline H_A(\lambda_{p,A}p)\over A^2}=0.
$$
\end{lem}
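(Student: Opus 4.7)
\textbf{Proof proposal for Lemma \ref{bend}.}  The plan is to prove the first limit by a direct contradiction argument using the subdifferential identity from Lemma \ref{basic}(1) together with Corollary \ref{strict}, and then to deduce the second limit from the first via the trivial upper bound $\overline{H}_A(q) \leq |q|^2 + A\|V\|_\infty |q|$.

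First, recall that by Lemma \ref{basic}(1) applied to $H = H_A$, for every unit vector $p$ and every $A \geq 1$, there exists
\[
q_{p,A} \in \partial \overline{H}_A(\lambda_{p,A} p) \quad \text{such that} \quad q_{p,A} \cdot \lambda_{p,A} p = \overline{H}_A(\lambda_{p,A} p) + 1.
\]
I would argue by contradiction for the first limit. Suppose
\[
\limsup_{A \to +\infty} \frac{\max_{|p|=1} \lambda_{p,A}}{A} > 0.
\]
Then there exist $\theta > 0$ and sequences $A_m \to +\infty$ and unit vectors $p_m$ with $\lambda_{p_m, A_m} \geq 2\theta A_m$. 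Setting $P_m = \lambda_{p_m,A_m} p_m$, we have $|P_m| \geq 2\theta A_m \geq \theta A_m$, so Corollary \ref{strict} applies and gives, for the element $q_{p_m, A_m} \in \partial \overline{H}_{A_m}(P_m)$,
\[
q_{p_m,A_m} \cdot P_m \geq \overline{H}_{A_m}(P_m) + \mu_\theta A_m^2.
\]
Combining this with the identity $q_{p_m,A_m} \cdot P_m = \overline{H}_{A_m}(P_m) + 1$ from Lemma \ref{basic}(1) yields $1 \geq \mu_\theta A_m^2$, which is absurd for large $m$ since $\mu_\theta > 0$ is fixed. This proves the first limit.

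For the second limit, I would use the elementary upper bound obtained by taking the zero function as a test function (or, more formally, by inspecting the cell problem and noting the convex inequality)
\[
\overline{H}_A(q) \leq |q|^2 + A\|V\|_\infty |q| \qquad \text{for all } q \in \Rset^2.
\]
Applying this at $q = \lambda_{p,A} p$ with $|p|=1$ gives
\[
\frac{\overline{H}_A(\lambda_{p,A} p)}{A^2} \leq \left( \frac{\lambda_{p,A}}{A} \right)^2 + \|V\|_\infty \frac{\lambda_{p,A}}{A},
\]
so the second limit follows immediately from the first.

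The only slightly delicate point is the application of Corollary \ref{strict}, whose derivation in the preceding lemma relies on a compactness argument using $\overline{H}(0) = 0$, $\overline{H}(p) \geq |p|^2$, and the strict convexity of $\overline{H}$ in the radial direction from Theorem \ref{EGone}; for the application here the constant $\mu_\theta$ only needs to be uniform over $|p| \geq \theta$ on the unit sphere, which is a compact condition, so no further work is needed.
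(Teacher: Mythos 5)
Your proof is correct and follows exactly the same route as the paper: contradiction for the first limit by combining the identity $q\cdot\lambda_{p,A}p=\overline H_A(\lambda_{p,A}p)+1$ from Lemma \ref{basic}(1) with Corollary \ref{strict}, then deducing the second limit from the elementary bound $\overline H_A(q)\leq |q|^2+A\overline M|q|$. You have simply spelled out the contradiction $1\geq \mu_\theta A_m^2$ more explicitly than the paper does.
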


\begin{proof} 
Since $\overline H_A(p)\leq |p|^2+A \overline M |p|$ for $\overline M=\max_{\Tset^2}|V|$,
the second limit holds true immediately once we prove the validity of the first limit.   

We prove the first limit by contradiction.  If not,  then there exists a sequence $A_m\to +\infty$ as $m\to +\infty$ and $|p_m|=1$ such that for $\lambda_m=\lambda_{p_m,A_m}$,  
$$
\lim_{m\to  +\infty}  { \lambda_{m}\over {A_m}}=b_0>0.
$$
So by Lemma \ref{basic},   there is $q_m\in   \partial \overline H_{A_m}(\lambda_m p_m)$ such that
$$
q_m\cdot \lambda_mp_m=\overline H_A(\lambda_mp_m)+1.
$$
This contradicts to Corollary \ref{strict} when $m$  is large enough.  
\end{proof}

Finally,  we prove  Theorem \ref{main3}.  

\begin{proof}[{\bf Proof of Theorem \ref{main3}}]  It suffices to show that there exists a unit vector $p_0$ such that $s p_0$ is a linear point of  $\{\overline H_A=\overline H_A(s p_0)\}$  for any $s>0$.  

\medskip

{\bf (1)}  Assume that $V$ is the shear flow, i.e.  $V=(v(x_2),0)$.   
 Without loss of generality,  we omit the dependence on $A$.  
 It is easy to see that $\overline H_A(p)$ has the following explicit formulas:  for $p=(p_1,p_2)\in  \Rset^2$, 
$$
\overline H(p)=|p_1|^2+h(p_1, p_2)
$$
and $h:\Rset^2\to  \Rset$ is given by 
$$
\begin{cases}
h(p)=M(p_1)=\max_{y\in  \Tset} p_1v(y)   \qquad &\text{if $|p_2|\leq \int_{0}^{1}\sqrt{M(p_1)-p_1v(y)}\,dy$}\\
|p_2|=\int_{0}^{1}\sqrt{h(p)-p_1v(y)}\,dy \qquad &\text{otherwise}. 
\end{cases}
$$
Hence   $\overline H$ is linear near the point $p=(s ,0)$ as long as $s\ne 0$.  

\medskip

{\bf (2)} Now  let $V=(-K_{x_2}, K_{x_1})$ for $K(x)=\sin 2\pi x_1 \sin 2\pi x_2$.  Recall that the cell problem is
\be\label{cell}
|p+Dv|^2+AV\cdot (p+Dv)=\overline H_A(p)\geq |p|^2.
\ee
The proof for the cellular flow case of Theorem \ref{main3} follows directly from the result of the following proposition.

\end{proof}

\begin{prop}\label{prop:M-2d}
Fix $s>0$ and let $Q=(s,0) \in \Rset^2$. If $A \neq 0$, then $Q$ is a linear point of the level set $\{\overline{H}_A=\overline H_A(Q)\}$.
\end{prop}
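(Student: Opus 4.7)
The plan is a proof by contradiction that exploits the rich symmetry structure of the cellular stream function $K(x_1,x_2) = \sin(2\pi x_1)\sin(2\pi x_2)$ together with Theorem \ref{foliation}. First, the oddness of $K$ in $x_2$ gives $V_1(x_1,-x_2) = V_1(x_1,x_2)$ and $V_2(x_1,-x_2) = -V_2(x_1,x_2)$. Applying the change of variable $\tilde v(x_1, x_2) := v(x_1, -x_2)$ in the cell problem shows $\overline H_A(p_1, -p_2) = \overline H_A(p_1, p_2)$, so the outward unit normal to the level curve $\{\overline H_A = \overline H_A(Q)\}$ at $Q = (s, 0)$ is the rational vector $e_1 = (1,0)$.

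Next, I would suppose $Q$ is not a linear point. Then by Theorem \ref{foliation}, the projected Mather set $\mathcal{M}_Q$ equals the whole torus $\mathbb T^2$ and is foliated by minimizing periodic orbits with common rotation vector $\rho e_1$ for some $\rho > 0$; in particular the corrector $v$ is a classical $C^{\infty}$ solution of the cell problem on $\mathbb T^2$. By uniqueness of $v$ (up to additive constants) combined with the $x_2$-symmetry, we may take $v(x_1, -x_2) = v(x_1, x_2)$. On the $V$-invariant separatrix $\{x_2 = 0\}$ (invariant because $V_2(x_1, 0) = K_{x_1}(x_1, 0) = 0$), the oddness of $\partial_2 v$ in $x_2$ forces $\partial_2 v|_{x_2 = 0} \equiv 0$, so the cell equation reduces to the one-dimensional equation
\[
(s + \partial_1 v(x_1, 0))^2 - 2\pi A \sin(2\pi x_1)(s + \partial_1 v(x_1, 0)) = c.
\]
Its unique continuous periodic solution, together with $\int_0^1 \partial_1 v(\cdot, 0)\, dx_1 = 0$, yields the explicit identity $s = \int_0^1 \sqrt{c + \pi^2 A^2 \sin^2(2\pi x_1)}\, dx_1$, where $c = \overline H_A(Q)$. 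The Mather orbit on $\{x_2 = 0\}$ is then horizontal with speed $\dot\xi_1(x_1) = 2\sqrt{c + \pi^2 A^2 \sin^2(2\pi x_1)}$.

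The final and most delicate step is to close the contradiction by a second-order analysis transverse to $\{x_2 = 0\}$. Writing $\beta(x_1) := \partial_{22} v(x_1, 0)$ and Taylor-expanding the cell equation to second order in $x_2$ at $x_2 = 0$ produces a Riccati-type ODE on $\mathbb T^1$ for $\beta$. Simultaneously, the linearized Mather-orbit dynamics near $\{x_2 = 0\}$ read $\dot x_2 \approx \mu(x_1) x_2$ with $\mu(x_1) = 2\beta(x_1) + 4\pi^2 A \cos(2\pi x_1)$; for the foliation to consist of closed orbits, the Floquet integral
\[
\int_0^1 \frac{\mu(x_1)}{\sqrt{c + \pi^2 A^2 \sin^2(2\pi x_1)}}\, dx_1
\]
must vanish. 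The main obstacle of the proof is to show that this Floquet condition together with the Riccati ODE for $\beta$ and the explicit formula for $s$ constitute an overdetermined system with no solution whenever $A \neq 0$ and $s > 0$; this algebraic incompatibility, which ultimately hinges on the product structure $K = \sin(2\pi x_1)\sin(2\pi x_2)$, would yield the desired contradiction and establish that $Q$ is a linear point of the level set.
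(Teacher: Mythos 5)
Your argument is correct and closely parallels the paper's up through the derivation of the one--dimensional ODE on the invariant line $\{x_2=0\}$ and the resulting identity
\[
s \;=\; \int_0^1 \sqrt{\,\overline H_A(Q) + \pi^2 A^2 \sin^2(2\pi x_1)\,}\,dx_1 ,
\]
but you stop short of a complete proof exactly at the point where the contradiction should be closed. You explicitly flag the ``Floquet condition together with the Riccati ODE'' incompatibility as the main remaining obstacle and do not establish it; a plan that ends with ``one would have to show this overdetermined system has no solution'' is not a proof. That is a genuine gap.

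What makes the gap particularly costly is that the second-order transverse analysis you propose is unnecessary: the identity you already derived ends the proof in one line. You know (and state earlier in the paper's framework) that $\overline H_A(p)\geq |p|^2$, so $\overline H_A(Q)\geq s^2$. Then pointwise $\sqrt{\overline H_A(Q)+\pi^2A^2\sin^2(2\pi x_1)}\geq \sqrt{\overline H_A(Q)}$, hence
\[
s \;=\; \int_0^1 \sqrt{\,\overline H_A(Q)+\pi^2A^2\sin^2(2\pi x_1)\,}\,dx_1 \;\geq\; \sqrt{\overline H_A(Q)} \;\geq\; s .
\]
Equality throughout forces $A\sin(2\pi x_1)\equiv 0$, i.e.\ $A=0$, which contradicts the hypothesis $A\neq 0$. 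This elementary integral inequality is precisely how the paper concludes, and it replaces your entire Riccati/Floquet step. Two smaller remarks: the paper obtains the even corrector unconditionally via the discounted approximation (so it does not need the strict-convexity/foliation hypothesis to justify the symmetry of $v$), whereas you invoke uniqueness under the contradiction hypothesis, which is also fine; and the claim that $v$ is ``classical $C^\infty$'' when $\mathcal M_Q=\Tset^2$ is stronger than what you need or what the cited regularity gives you directly (only $C^{1,1}$ on the Mather set is asserted) --- you only need differentiability of $v$ along $\{x_2=0\}$ with $\partial_2 v=0$ there, which you do have.
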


\begin{proof}
Let $\mathcal{M}_Q$ be the projected Mather set at the point $Q$.  By symmetry,  it is easy to see that  $\partial \overline H(Q)$ is parallel to (1,0).   Then due to Theorem \ref{foliation},  it suffices to show that  
\begin{equation}\label{AQ-cl}
\mathcal{M}_Q \cap \{y \in \Tset^2\,:\,y_2=0\} = \emptyset.
\end{equation}

{\bf Step 1:}  We claim that there is a viscosity solution $v$ to (\ref{cell}) which satisfies that $v(y_1,y_2)=v(y_1,-y_2)$.  In fact,  let us now look at the discounted approximation of \eqref{cell} with $p=Q$. For each $\ep>0$, consider
\begin{equation}\label{EQ-ep}
\ep v^\ep+ |Q+Dv^\ep|^2 + AV\cdot(Q+Dv^\ep) = 0 \quad \text{in} \ \Tset^2,
\end{equation}
which has a unique viscosity solution $v^\ep \in C^{0,1}(\Tset^2)$.
By the fact that $Q=(s,0)$ and the special structure of $V$, 
it is clear that $(y_1,y_2) \mapsto v^\ep(y_1,-y_2)$ is also a solution to the above. 
Therefore, $v^\ep(y_1,y_2)=v^\ep(y_1,-y_2)$ for all $(y_1,y_2)\in \Tset^2$.  Clearly,  any convergent subsequence of   $v^\ep - v^\ep(0)$ tends to a $v$ which is  a solution of \eqref{cell} and is even in the $y_2$ variable.    We would like to point out that a recent result of Davini, Fathi, Iturriaga and Zavidovique \cite{DFIZ} (see also Mitake and Tran \cite{MT14}) gives the convergence of the full sequence $v^\ep - v^\ep(0)$  as $\ep \to 0$. 

\medskip

{\bf Step 2:}  Assume by contradiction that (\ref{AQ-cl}) is not correct.  Suppose that 
$$
(\mu_0,0)\in \mathcal{M}_Q \cap \{y \in \Tset^2\,:\,y_2=0\}.
$$
Then $v$ is differentiable at $(\mu_0,0)$ and  $v_{x_2}(\mu_0,0)=0$.   Due to  (\ref{graphsupport}),  the flow-invariance of the Mather set and the Euler-Lagrangian equation,  it is easy to see that
$$
\{y \in \Tset^2\,:\,y_2=0\}\subset \mathcal{M}_Q.
$$
Hence $v$ is $C^1$ along the $y_1$ axis and 
\begin{equation}\label{eq:M1}
v_{y_2}(y_1,0)=0 \quad \text{for all} \ y_1 \in \Tset.
\end{equation} 
Set $w(y_1)=v(y_1,0)$. Plug this into the equation \eqref{cell} of $v$ with $y_2=0$ and use \eqref{eq:M1} to get that
\begin{equation}\label{eq:M2}
|s+w'|^2 - A (s+w')\sin (2\pi y_1) =\overline{H}_A(Q)\geq s^2  \quad \text{in} \ \Tset. 
\end{equation}
Clearly, $s+w'(y_1) \neq 0$ for all $y_1 \in \Tset$ in light of \eqref{eq:M2}. 
Note further that $w'\in C(\Tset)$ and
\[
\int_0^1 (s+w'(y_1))\,dy_1 = s >0.
\]
Thus, $s+w'>0$ in $\Tset$ and for all $y_1 \in \Tset$,
\[
s+w'(y_1)  = \frac{1}{2} \left( A\sin (2\pi y_1) + \sqrt{A^2 \sin^2 (2\pi y_1) + 4\overline{H}_A(Q)} \right).
\]
Integrate this over $\Tset$ to deduce that
\begin{align*}
s=\int_0^1 (s+w'(y_1))\,dy_1&= \int_0^1  \frac{1}{2} \left( A\sin (2\pi y_1) + \sqrt{A^2 \sin^2 (2\pi y_1) + 4\overline{H}_A(Q)} \right)\,dy_1\\
&= \int_0^1  \frac{1}{2}  \sqrt{A^2 \sin^2 (2\pi y_1) + 4\overline{H}_A(Q)} \,dy_1 \geq \sqrt{\overline{H}_A(Q)} \geq s.
\end{align*}
Therefore, all inequalities in the above must be equalities.
In particular, the second last inequality must be an equality, which yields that $A=0$.
\end{proof}
\begin{rmk} Theorem \ref{foliation} is not really necessary to get the above proposition.  In fact,  using the same argument,  we can derive that the Aubry set has no intersection with the $y_1$ axis.  Then by \cite{FS04},   there is a strict subsolution to  \eqref{cell} near the $y_1$ axis.   The linearity of $\overline H$ near $Q$ will follow from some elementary calculations. 

\end{rmk}

\bibliographystyle{plain}

\end{document}